\newcommand{\bu}{\mathbf{u}}
\newcommand{\bua}{\mathbf{u}^\alpha}
\newcommand{\buna}{\mathbf{u}_n^\alpha}
\newcommand{\bv}{\mathbf{v}}
\newcommand{\bva}{\bv^\alpha}
\newcommand{\bvan}{\bva_n}
\newcommand{\bw}{\mathbf{w}}
\newcommand{\by}{y^\alpha_n}
\newcommand{\el}{\mathbb{L}}
\newcommand{\ve}{\mathbf{V}}
\newcommand{\h}{\mathbf{H}}
\newcommand{\bh}{\mathbb{H}}
\newcommand{\Lve}{\lVert}
\newcommand{\Rve}{\rVert}
\newcommand{\me}{\mathbb{E}}
\newcommand{\sA}{\mathrm{A}}
\newcommand{\hsa }{\sA^\frac12}
\newcommand{\ipsa}{(\Id+\alpha^2 \sA)^{-1}}
\newcommand{\na}{\mathrm{N}_\alpha}
\newcommand\toup{\nearrow}
\newcommand{\RR}{\mathbb{R}}
\newcommand{\MO}{\mathcal{O}}
\newcommand{\rK}{\mathrm{K}}
\def\eps{\varepsilon}
\numberwithin{equation}{section}
\DeclareMathOperator{\lsp}{linspan}
\DeclareMathOperator{\Id}{I}
\theoremstyle{plain}
\newtheorem{condition}{Condition}[section]
\newtheorem{assum}[condition]{Assumption}
\newtheorem{lem}{Lemma}[section]
\newtheorem{thm}[lem]{Theorem}
\newtheorem{prop}[lem]{Proposition}
\newtheorem{cor}[lem]{Corollary}
\theoremstyle{definition}
\newtheorem{Def}[lem]{Definition}
\newtheorem{Rem}[lem]{Remark}
\title[Rate of convergence of the 2-D stochastic Leray-$\alpha$ to  the 2-D stochastic NSE]{On the rate of convergence of the 2-D stochastic Leray-$\alpha$ model
to the 2-D stochastic Navier-Stokes equations with multiplicative noise}
\author[H. Bessaih]{Hakima Bessaih}
\address[H. Bessaih]{Department of Mathematics, University of Wyoming, 1000 East University Avenue, Laramie WY 82071, United States,}
\author[P. Razafimandimby]{Paul Razafimandimby}
\address[P.~Razafimandimby]{Department of Mathematics and Information Technology, Montanuniversit\"at Leoben,
Fr. Josefstr. 18, 8700 Leoben, Austria}
\email[H.~Bessaih]{bessaih@uwyo.edu}
\email[P.~Razafimandimby]{paul.razafimandimby@unileoben.ac.at}
\begin{document}
\begin{abstract}
 In the present paper we study the convergence of the solution of  the two dimensional (2-D)  stochastic Leray-$\alpha$ model to the solution of the 2-D stochastic Navier-Stokes equations. 
 We are mainly interested in the rate, as $\alpha\to 0$, of the following error function 
 \begin{equation*}
 \eps_\alpha(t)=\sup_{s\in [0,t]} \lvert \bua(s)-\bu(s)\rvert+\left( \int_0^t \lvert \hsa [\bua(s)-\bu(s)] \rvert^2 ds \right)^\frac12,
\end{equation*}
  where $\bua$ and $\bu$ are the solution of stochastic Leray-$\alpha$ model and the stochastic Navier-Stokes equations, respectively.
  We show that when properly localized the error function $\eps_\alpha$ converges in mean square as $\alpha\to 0$ and the convergence is of order $O(\alpha)$. We also prove
   that $\eps_\alpha$ converges in probability to zero with order at most $O(\alpha)$.
\end{abstract}
\subjclass[2000]{60H15, 76D05, 60H35}
\keywords{Navier-Stokes equations, Leray-$\alpha$ model, Rate of convergence in mean square, Rate of convergence in probability, Turbulence models, Navier-Stokes-$\alpha$}
\maketitle
\section{Introduction}
The Navier-Stokes system is  the most used model in turbulence theory. 
In recent years, various regularization models were introduced as an efficient subgrid model scale of the Navier-Stokes equations, see for eg. \cite{CLT, Chen-1, Chen-2, Chen-3, CHOT, Foias1, Foias2, HT, ILT, LL}. Moreover, numerical analyses in 
\cite{CHMZ, GH, HT, LL-0, LKT, LKTT, MKSM, NS} seem to confirm that these models  can capture remarkably well the physical phenomenon of turbulence in fluid flows at a lower
computational cost. Among them are the Navier-Stokes-$\alpha$, Leray-$\alpha$, modified Leray-$\alpha$, Clark-$\alpha$ to name just a few.

Another tool used to tackle the closure problem in turbulent flows is to introduce a stochastic forcing that will mimic all the terms that can't be handled. This approach is basically motivated by
Reynolds' work which stipulates that hydrodynamic turbulence is
composed of slow (deterministic) and fast (stochastic) components. This approach was used in \cite{ROZOVSKII1} to derive a stochastic Navier-Stokes equations
with gradient and nonlinear diffusion coefficient. 

 It is worth emphasizing that the presence of the stochastic term
 (noise) in the model often leads to qualitatively new types of
 behavior, which are very helpful in understanding real
 processes and is also often more realistic. In particular, for the 2d Navier-Stokes equations, 
 some ergodic properties are proved when adding a random perturbation, 

There is an extensive literature about the convergence of $\alpha$-models  to the Navier-Stokes equations, see for eg.  \cite{Foias2, BBF, Caraballo1, Caraballo2, IC+SK, 
DEUGOUE3, DEUGOUE2, DEUGOUE4, DRS}.
However, only a few papers deal with the rate of convergence, see \cite {Titi, waymire}. 
In \cite{Titi} the rates of convergence of
four $\alpha$-models (NS-$\alpha$ model, Leray-$\alpha$ model, modified Leray-$\alpha$ model, and
simplified Bardina model) in the two-dimensional (2D) case, subject to
periodic boundary conditions on the periodic box $[0, L]^2$ are studied. The authors of \cite{Titi} mainly showed that all the four $\alpha$-models 
have the same order of convergence and error estimates; that is, the convergences in the $\el^2$-norms  are all of the order $O\left(\frac\alpha L \big(\log(\frac L \alpha) \big)^\frac12 \right)$ as 
$ \frac\alpha L$ tends to zero, while in \cite{waymire} the rate of convergence  of order 
$O(\alpha)$ is obtained in a mixed $L^1-L^2$ time-space norm with small initial data in Besov-type function spaces.

Despite the numerous papers, there are 
only very few addressing the convergence of stochastic $\alpha$-models to the stochastic Navier-Stokes. It is proved in \cite{IC+SK} that the stochastic Leray-$\alpha$ 
model has a unique invariant measure which converges to the stationary solution (unique invariant measure) of 3-D (resp. 2-D) stochastic Navier-Stokes equations. In \cite{DEUGOUE3} and 
\cite{DEUGOUE2} Deugou\'e and Sango proved that one can find a sequence of weak martingales of the 3-D stochastic Navier-Stokes-$\alpha$ and Leray-$\alpha$ model respectively which converges in 
distribution to the weak martingale solution of the 3-D stochastic Navier-Stokes equations.

Here in this paper,  we are interested in the analysis of the rate of convergence of the two-dimensional
stochastic Leray-$\alpha$ model to the stochastic Navier-Stokes Equations. 
More precisely we consider the Leray-$\alpha$ model with multiplicative stochastic perturbation on a periodic domain $\MO=[0,L]^2$, $L>0$, given by the following  system 
\begin{subequations}\label{LAM}
\begin{align}
 d\bv^\alpha(t) +[\nu\sA\bv^\alpha(t) +B(\bu^\alpha(t), \bv^\alpha(t))]dt= Q{(\bu^\alpha(t))}dW(t), \, t\in (0,T] \label{LAM-1}\\
 \bu^\alpha+\alpha^2 \sA\bu^\alpha=\bv^\alpha,\label{LAM-2}\\
 \bu^\alpha(0)=\bu_0,
\end{align}
\end{subequations}
where $W$ is a cylindrical Wiener process on a separable Hilbert space $\rK$, $A$ is the Stokes operator and $B$ is the well-known bilinear map in the mathematical theory of the 
Navier-Stokes equations. We refer to Section \ref{notation} for the functional setting. 
 
Our main goal in the present paper is to 
study the convergenve of the solution $\bua$ to \eqref{LAM} to the solution of the stochastic Navier-Stokes equations given by 
\begin{subequations}\label{NSE}
\begin{align}
 d\bu(t)=[-\sA\bu(t)-B(\bu(t),\bu(t))]dt +Q(\bu(t)) dW(t), \, t\in (0, T],\\
 \bu(0)=\bu_0.
\end{align}
\end{subequations}
To the best of our knowledge it seems that the investigation of the rate 
of convergence of the stochastic $\alpha$-model to the stochastic Navier-Stokes has never been done before.
In this paper we initiate this direction of research by studying the rate of convergence of the error function
\begin{equation*}
 \eps_\alpha(t)=\sup_{s\in [0,t]} \lvert \bua(s)-\bu(s)\rvert+\left( \int_0^t \lvert \hsa [\bua(s)-\bu(s)] \rvert^2 ds \right)^\frac12,
\end{equation*}
  as $\alpha$ tend to zero. Here $\lvert \cdot \rvert$ denotes the $\el^2(\MO)$-norm. By deriving several important uniform estimates for the sequence of stochastic processes $\bua$ we can prove that for an 
  appropriate family of stopping times $\{\tau_R; R>0\}$ the stopped error function
  $\eps_\alpha(t\wedge \tau_R)$ converges to $0$ in mean square as $\alpha$ goes to zero and the convergence is of order $O(\alpha)$. In particular, this shows that when 
  the error function $\eps_\alpha$ is properly localized then the order of convergence in the stochastic case is better than the one in the deterministic case. 
  In this paper, we also 
   prove that the convergence in probability (see for example \cite{Prin} for the definition) of $\eps_\alpha$ is also of order $O(\alpha)$. These results can be found in 
    Theorem \ref{ORDER} and Theorem \ref{ORDER-PR}. We mainly combine the approaches used in \cite{BBM} and in \cite{Titi}.
   
   In section \ref{notation} we introduce the notations and some  frequently used lemmata.
   In Section  \ref{AP-EST-SEC},  we introduce the main assumptions on the diffusion coefficient.
   Moreover, several important uniform estimates which are the
backbone of our analysis will be derived. In Section \ref{RATE},  we state and prove our main results; we mainly show that when properly
localized the error function $\eps_\alpha$ converges in mean square to zero as $\alpha$ tend to zero. Owing to the uniform estimates obtained in Section 
\ref{AP-EST-SEC},  we also show in Section  \ref{RATE} that 
it converges in probability with order $O(\alpha)$ as $\alpha$ tends to zero. 

Throughout the paper $C$, $c$ denote some unessential constants which do not depend on $\alpha$ and may change from one place to the next one. 
   
\section{Notations}\label{notation}
In this section we introduce some notations that are frequently used in this paper. We will mainly follow the presentation of
 Cao and Titi \cite{Titi}.

 Let $\MO$ be a bounded subset of $\RR^2$. For any $p\in [1,\infty)$ and $k\in \mathbb{N}$,  $\el^p(\MO)$ and
$\mathbb{W}^{k,p}(\MO)$ are the well-known Lebesgue and Sobolev
spaces, respectively, of $\mathbb{R}^2$-valued functions. The
corresponding spaces of scalar functions we will denote by
standard letter, e.g. ${W}^{k,p}(\MO)$. The usual scalar product on $\el^2$ is denoted by $\langle
u,v\rangle$ for $u,v\in \el^2$. Its associated norm is $\lvert
u\rvert$, $u\in \el^2$.

Let $L>0$ and $\mathcal{P}$ be the set of (periodic) trigonometric polynomials of two variables
defined on the periodic
domain $\MO=[0,L]^2$ and with zero spatial average; that is, for every $\phi\in \mathcal{P}$, $\int_\MO \phi(x) dx=0$.
We also set
\begin{align*}
\mathcal{V}& =\left\{ \bu\in [\mathcal{P}]^2\,\,\text{such that}%
\,\,\nabla \cdot \bu=0\right\} \\
\mathbf{V}& =\,\,\text{closure of $\mathcal{V}$ in }\,\,\mathbb{H}^{1}(\MO) \\
\mathbf{H}& =\,\,\text{closure of $\mathcal{V}$ in
}\,\,\mathbb{L}^{2}(\MO).
\end{align*}
 We endow the spaces $\h$ with the scalar product and norm of $\el^2$. We equip the space $\ve$
with the scalar product
$(( \bu, \bv)):=\int_\MO \nabla \bu(x) \cdot \nabla \bv(x) dx $ which is equivalent to the
$\bh^1(\MO)$-scalar product on $\ve$.  The norm corresponding to the scalar product $((\cdot, \cdot ))$ is denoted by
$\Lve \cdot \Rve$.

Let $\Pi: \el^2 \rightarrow \h$ be the projection
from $\el^2$ onto $\h$. We denote by $\sA$ the
Stokes operator defined by
\begin{equation}\label{STOKES}
\begin{cases}
 D(\sA)=\{ u\in \h, \; \Delta u \in \h\},\\
 \sA u =-\Pi\Delta u, \; u\in D(\sA).
\end{cases}
\end{equation}
Note that in the space-periodic case
$$ \sA \bu=-\Pi \Delta \bu=-\Delta \bu, \text{ for all } \bu \in D(\sA).$$
The operator $\sA$ is a self-adjoint, positive definite, and a compact operator on $\h$ (see, for instance, \cite{PC+CF-88, Temam}).
We will denote by $\lambda_1\le \lambda_2\le \ldots$ the eigenvalues of $\sA$; the correspoding eigenfunctions
$\{\Psi_i: i=1, 2, \ldots\}$ form an orthonormal basis of $\h$ and an
orthogonal basis of $\ve$.
For any positive integer $n\in \mathbb{N}$ we set $$ \h_n =\lsp\{\Psi_i: i=1,\ldots, n\}$$ and we denote by
$P_n$ the orthogonal projection onto $\h_n$ defined by
$$ P_n \bu =\sum_{i=1}^n \langle \bu, \Psi_i\rangle \Psi_i, \text{  for all } \bu \in \h.$$
We also recall that in the periodic case we have $D(\sA^\frac n2)=\bh^n(\MO)\cap \h$, for $n>0$ (see, for instance, \cite{PC+CF-88, Temam}).
In particular we have $\ve=D(\sA^\frac12)$.

For every $\bw\in \ve$, we have the following Poincar\'e inequality
\begin{equation}\label{Poincare-1}
 \lambda_1 \lvert \bw \rvert^2 \le  \lVert \bw \rVert^2, \text{ for all } \bw \in \ve.
\end{equation}
Also, there exists $c>0$ such that
\begin{equation}\label{Poincare-2}
 c\lvert \sA \bw \rvert \le \Lve \bw \Rve_2 \le c^{-1} \lvert \sA \bw \rvert \text{  for every } \bw \in D(\sA),
\end{equation}
\begin{equation}\label{Poincare-3}
 c \lvert \sA^\frac12 \bw \rvert \le \Lve \bw \Rve_1 \le c^{-1} \lvert \sA^\frac12 \bw \rvert \text{  for every }
 \bw \in \ve.
\end{equation}
Thanks to \eqref{Poincare-3} the norm $\Lve \cdot \Rve$ of $\ve$ is equivalent to the usual $\bh^1(\MO)$-norm.
Recall that the following estimate, valid for all $\bw \in \bh^1$ (or $\bw \in H^1$), is a special case of
Gagliardo-Nirenberg's inequalities:
\begin{align}
\Lve \bw\Rve_{\el^4}\le c \lvert \bw\rvert^{\frac12} \lvert \nabla \bw
\rvert^\frac12. \label{GAG-l4}
\end{align}
The inequality \eqref{GAG-l4} can be written in the spirit of
the continuous embedding
\begin{equation}\label{SOB-EM}
\bh^1\subset \el^4.
\end{equation}

Next, for avery $\bw_1,\bw_2\in \mathcal{V}$ we define the bilinear operator
\begin{equation}\label{BIL-B1}
 B(\bw_1,\bw_2)=\Pi[(\bw_1\cdot\nabla)\bw_2].
\end{equation}
In the following lemma we recall some properties of the bilinear operator $B$.
\begin{lem}
 The bilinear operator $B$ defined in \eqref{BIL-B1} satisfies the following
 \begin{enumerate}[(i)]
  \item $B$ can be extended as a continuous bilinear map $B:\ve \times \ve \to \ve^\ast$, where $\ve^\ast$
  is the dual space of $\ve$. In particular, the following inequalities/equalities hold for all $\bu,\bv, \bw \in \ve$:
  \begin{align}
   \lvert \langle B(\bu,\bv),\bw\rangle \rvert \le &  c \vert \bu \vert^\frac12 \Lve \bu \Rve^\frac12 \Lve \bv \Rve
   \vert \bw \vert^\frac12 \Lve \bw \Rve^\frac12, \label{BIL-B2}\\
   \langle B(\bu,\bv),\bw\rangle=& -\langle B(\bu,\bw), \bv\rangle. \label{BIL-B3}
  \end{align}
As consequence of \eqref{BIL-B3} we have
\begin{equation}\label{BIL-B4}
 \langle B(\bu,\bv),\bv\rangle=0
\end{equation}
for all $\bu, \bv\in \ve$.
\item In the 2D periodic boundary condition case, we have
\begin{equation}\label{BIL-B5}
 \langle B(\bu,\bu),\sA\bu\rangle=0,
\end{equation}
for every $\bu \in D(\sA)$.
 \end{enumerate}
\end{lem}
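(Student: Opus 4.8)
The plan is to dispatch the algebraic identities \eqref{BIL-B3} and \eqref{BIL-B4} first, since the continuity statement and the estimate \eqref{BIL-B2} follow cleanly once the formula for $B$ is in hand, and to treat \eqref{BIL-B5} separately because it is the only part that genuinely exploits the two-dimensional periodic geometry. For the antisymmetry \eqref{BIL-B3} I would write, for $\bu,\bv,\bw\in\mathcal{V}$, the expression $\langle B(\bu,\bv),\bw\rangle=\int_\MO\sum_{i,j}u_i(\partial_i v_j)w_j\,dx$ and integrate by parts in the variable $x_i$. Periodicity annihilates all boundary terms, and the term carrying $\partial_i u_i=\nabla\cdot\bu$ vanishes because $\bu$ is divergence free, leaving exactly $-\langle B(\bu,\bw),\bv\rangle$. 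Specializing $\bw=\bv$ then gives $\langle B(\bu,\bv),\bv\rangle=-\langle B(\bu,\bv),\bv\rangle$, whence \eqref{BIL-B4}.

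For the estimate \eqref{BIL-B2} and the continuity claim I would bound $\lvert\langle B(\bu,\bv),\bw\rangle\rvert\le\int_\MO\lvert\bu\rvert\,\lvert\nabla\bv\rvert\,\lvert\bw\rvert\,dx$ and apply H\"older's inequality with exponents $(4,2,4)$ to get $\Lve\bu\Rve_{\el^4}\,\Lve\bv\Rve\,\Lve\bw\Rve_{\el^4}$, using that $\lvert\nabla\bv\rvert=\Lve\bv\Rve$. Inserting the Gagliardo--Nirenberg bound \eqref{GAG-l4} for the two $\el^4$-factors, namely $\Lve\bu\Rve_{\el^4}\le c\lvert\bu\rvert^{1/2}\Lve\bu\Rve^{1/2}$ and similarly for $\bw$, produces \eqref{BIL-B2} verbatim. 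Combining this with the Poincar\'e inequality \eqref{Poincare-1} yields $\lvert\langle B(\bu,\bv),\bw\rangle\rvert\le c\Lve\bu\Rve\,\Lve\bv\Rve\,\Lve\bw\Rve$, which shows that $B(\bu,\bv)$ defines a bounded functional on $\ve$ depending continuously and bilinearly on $(\bu,\bv)$; a density argument then extends $B$ from $\mathcal{V}\times\mathcal{V}$ to all of $\ve\times\ve$.

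The delicate step is \eqref{BIL-B5}. Since in the periodic case $\sA\bu=-\Delta\bu\in\h$, the Leray projection may be dropped and $\langle B(\bu,\bu),\sA\bu\rangle=-\int_\MO[(\bu\cdot\nabla)\bu]\cdot\Delta\bu\,dx$. The key is the vector identity $(\bu\cdot\nabla)\bu=\nabla(\tfrac12\lvert\bu\rvert^2)+\omega\,\bu^\perp$, where $\omega=\partial_1 u_2-\partial_2 u_1$ is the scalar vorticity and $\bu^\perp=(-u_2,u_1)$. The pure-gradient part integrates against $\Delta\bu$ to zero after one integration by parts, since it reduces to $-\int_\MO\tfrac12\lvert\bu\rvert^2\,\Delta(\nabla\cdot\bu)\,dx=0$. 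For the remaining term I would invoke that, for divergence-free $\bu$ in two dimensions, $\Delta\bu=\nabla^\perp\omega$ with $\nabla^\perp=(-\partial_2,\partial_1)$, so that $\bu^\perp\cdot\Delta\bu=\bu^\perp\cdot\nabla^\perp\omega=\bu\cdot\nabla\omega$; the integrand then becomes $\omega\,(\bu\cdot\nabla\omega)=\bu\cdot\nabla(\tfrac12\omega^2)$, which integrates to zero by one further integration by parts together with $\nabla\cdot\bu=0$.

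I expect the identity $\Delta\bu=\nabla^\perp\omega$ to be the crux of the argument: it holds only because we work in two dimensions under periodic boundary conditions, and it is precisely what renders the nonlinear term orthogonal to $\sA\bu$. This orthogonality is the structural fact behind the enstrophy-type bounds that the uniform estimates in the sequel will rely on, so I would present \eqref{BIL-B5} with the vorticity computation made explicit rather than merely cited.
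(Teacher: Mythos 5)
Your proof is correct. Note, however, that the paper does not actually prove this lemma: its ``proof'' consists of a citation to Constantin--Foias and Temam, so there is nothing to compare step by step. Your argument is the standard one found in those references. The first two parts (integration by parts plus $\nabla\cdot\bu=0$ for the antisymmetry \eqref{BIL-B3}, and H\"older with exponents $(4,2,4)$ followed by the Ladyzhenskaya/Gagliardo--Nirenberg bound \eqref{GAG-l4} and Poincar\'e for \eqref{BIL-B2} and the extension by density) are routine and correctly executed. The only genuinely two-dimensional step, \eqref{BIL-B5}, is also handled correctly: the decomposition $(\bu\cdot\nabla)\bu=\nabla(\tfrac12\lvert\bu\rvert^2)+\omega\,\bu^{\perp}$ checks out componentwise, the gradient part is annihilated by $\Delta\bu$ after integrating by parts and using $\nabla\cdot\bu=0$, and the identity $\Delta\bu=\nabla^{\perp}\omega$ (valid precisely for divergence-free periodic fields in 2D) reduces the remaining term to $\int_{\MO}\bu\cdot\nabla(\tfrac12\omega^{2})\,dx=0$. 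An equally common alternative is the direct index computation with two integrations by parts, but the vorticity route you chose is cleaner and makes transparent why the identity fails in 3D and for other boundary conditions; writing it out explicitly, as you propose, is a genuine improvement over the paper's bare citation, since \eqref{BIL-B5} is used crucially in the estimates of Section 3 (e.g.\ in \eqref{GLAM-6}).
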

\begin{proof}
 The proof of the above lemma can be found, for instance, in \cite{PC+CF-88, Temam}.
\end{proof}
We also recall the following lemma.
\begin{lem}
 For every $\bu \in D(\sA)$ and $\bv \in \ve$, we have
 \begin{equation}
  \lvert \langle B(\bv,\bu), \sA\bu\rangle \rvert \le c \Lve \bv \Rve \Lve \bu \Rve \,\lvert \sA \bu\rvert.
 \end{equation}
\end{lem}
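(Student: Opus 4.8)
The plan is to exploit the divergence-free constraint through an integration by parts, since a direct application of Hölder's inequality turns out to be too lossy. Indeed, writing $\langle B(\bv,\bu),\sA\bu\rangle=\langle(\bv\cdot\nabla)\bu,\sA\bu\rangle$ and splitting it directly by Hölder in $\el^4\times\el^4\times\el^2$, then invoking the Gagliardo--Nirenberg inequality \eqref{GAG-l4} together with the Poincaré inequality \eqref{Poincare-1}, yields only a bound of the form $c\,\Lve\bv\Rve\,\Lve\bu\Rve^{\frac12}\,\lvert\sA\bu\rvert^{\frac32}$. This carries the wrong power of $\lvert\sA\bu\rvert$ and cannot be converted into the claimed estimate; the correct power is recovered only after transferring one derivative off $\sA\bu$ and using the incompressibility of $\bv$.

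First I would reduce the expression to a purely differential one. Since $\bu\in D(\sA)$ we have $\sA\bu=-\Delta\bu\in\h$ by \eqref{STOKES}, and because $\Pi$ is the orthogonal projection onto $\h$, it follows that $\langle B(\bv,\bu),\sA\bu\rangle=\langle(\bv\cdot\nabla)\bu,-\Delta\bu\rangle$. Working first with smooth periodic $\bu$ and $\bv$ (which are dense in $D(\sA)$ and in $\ve$ respectively), I integrate by parts in each variable $x_k$; the periodicity ensures that no boundary terms appear, giving
\[
\langle B(\bv,\bu),\sA\bu\rangle
=\int_\MO\sum_{i,j,k}(\partial_k v_i)(\partial_i u_j)(\partial_k u_j)\,dx
+\tfrac12\int_\MO\sum_i v_i\,\partial_i\lvert\nabla\bu\rvert^2\,dx.
\]
The second integral equals $-\tfrac12\int_\MO(\Div\bv)\,\lvert\nabla\bu\rvert^2\,dx$ after one further integration by parts, and hence vanishes because $\Div\bv=0$. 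This cancellation is the heart of the estimate and the main point to get right; it leaves the clean identity $\langle B(\bv,\bu),\sA\bu\rangle=\int_\MO\sum_{i,j,k}(\partial_k v_i)(\partial_i u_j)(\partial_k u_j)\,dx$.

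It then remains to estimate this trilinear form, which is now straightforward. I would bound $\lvert\langle B(\bv,\bu),\sA\bu\rangle\rvert\le c\int_\MO\lvert\nabla\bv\rvert\,\lvert\nabla\bu\rvert^2\,dx$ and apply Hölder's inequality with exponents $2,4,4$ to obtain $c\,\lvert\nabla\bv\rvert\,\Lve\nabla\bu\Rve_{\el^4}^2=c\,\Lve\bv\Rve\,\Lve\nabla\bu\Rve_{\el^4}^2$. Applying \eqref{GAG-l4} to each component of $\nabla\bu$ gives $\Lve\nabla\bu\Rve_{\el^4}^2\le c\,\lvert\nabla\bu\rvert\,\lvert\nabla\nabla\bu\rvert$; since $\lvert\nabla\bu\rvert=\Lve\bu\Rve$ and \eqref{Poincare-2} yields $\lvert\nabla\nabla\bu\rvert\le c\,\lvert\sA\bu\rvert$, I arrive at exactly $c\,\Lve\bv\Rve\,\Lve\bu\Rve\,\lvert\sA\bu\rvert$.

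The only remaining delicate point is to justify the integration by parts and the cancellation rigorously at the level of $\bu\in D(\sA)$ and $\bv\in\ve$, rather than for smooth fields. I would handle this by a density argument: both sides of the final identity are continuous trilinear forms for the norms $\Lve\bv\Rve$, $\Lve\bu\Rve$ and $\lvert\sA\bu\rvert$ appearing in the statement, so approximating $\bu$ in $D(\sA)$ and $\bv$ in $\ve$ by smooth periodic fields and passing to the limit transfers both the identity and the estimate to the general case. I expect this density step to be the only place requiring care, the algebra itself being elementary once the divergence-free cancellation has been isolated.
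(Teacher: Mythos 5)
Your proof is correct and follows essentially the same route as the paper, which gives no argument of its own for this lemma but defers to \cite{Titi}: the estimate there is obtained by exactly your integration by parts, the divergence-free cancellation of the $\tfrac12\int \bv\cdot\nabla\lvert\nabla\bu\rvert^2\,dx$ term, and then H\"older with Gagliardo--Nirenberg \eqref{GAG-l4} applied to $\nabla\bu$. Your opening observation that the naive H\"older bound gives the wrong power of $\lvert\sA\bu\rvert$, and your closing density argument, are both accurate.
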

\begin{proof}
 For the proof we refer to \cite{Titi}.
\end{proof}
\section{A priori estimates for the stochastic Navier-Stokes equations and the stochastic Leray-$\alpha$ model}\label{AP-EST-SEC}
The stochastic Leray-$\alpha$ model \eqref{LAM} and the stochastic
Navier-Stokes equations \eqref{NSE} have been extensively studied.
Their well posedness are established in several mathematical
papers. In this section we just recall the most recent results
which are very close to our purpose. Most of these results were
obtained from Galerkin approximation and energy estimates.
However, the estimates derived in previous papers are not
sufficient for our analysis. Therefore, we will also devote this
section to derive several important estimates which are the
backbone of our analysis.

We consider a prescribed complete probability system $(\Omega, \mathcal{F}, \mathbb{P})$ equipped with a filtration $\mathbb{F}:=\{\mathcal{F}_t; t \ge0\}$. We assume that the filtration satisfies the usual 
condition, that is, the family $\mathbb{F}$ is increasing, right-continuous and $\mathcal{F}_0$ contains all null sets of $\mathcal{F}$.
Let $\rK$ be a separable Hilbert space. On the filtered probability space $(\Omega, \mathcal{F}, \mathbb{F}, \mathbb{P})$ we suppose that we are given a cylindrical Wiener process $W$ on $\rK$.

For two Banach spaces $X$ and $Y$, we denote by $\mathcal{L}(X,Y)$ the space of all bounded linear maps $L: X\rightarrow Y$. The space of all Hilbert-Schmidt operators $L:X\rightarrow Y$ is denoted 
by $\mathcal{L}_2(X,Y)$. The Hilbert-Schmidt norm of $L\in \mathcal{L}_2(X,Y)$ is denoted by $\Vert L\Vert_{\mathcal{L}_2(X,Y)}$. When $X=Y$ we just write  $\mathcal{L}_2(X):=\mathcal{L}_2(X,X)$.

Now, we can introduce the standing assumptions of the paper.
\begin{assum}\label{HYP-Q}
 Throughout this paper we assume that $Q:D(\sA^\frac12 ) \rightarrow \mathcal{L}(\rK,D(\hsa)) $ satisfies:
 \begin{enumerate}[(i)]
  \item \label{HYP-Q-i} there exists $\ell_0>0$ such that for any $\bu_1, \bu_2\in D(\hsa)$ we have
  \begin{equation*}
   \Lve Q(\bu_1)-Q(\bu_2)\Rve_{\mathcal{L}_2(\rK,\h)}\le \ell_0 \lvert \bu_1-\bu_2\rvert,
  \end{equation*}
  \item \label{HYP-Q-ii} there exists $\ell_1>0$ such that for any $\bu_1, \bu_2\in D(\hsa)$ we have
  \begin{equation*}
   \Lve \hsa \big[Q(\bu_1)-Q(\bu_2)\big]\Rve_{\mathcal{L}_2(\rK,\h)}\le \ell_1 \lvert \hsa \bu_1-\hsa \bu_2\rvert.
  \end{equation*}
 \end{enumerate}
\end{assum}
\begin{Rem}\label{Rem-HYP}
 Assumption \ref{HYP-Q} implies in particular that
 \begin{enumerate}[(1)]
  \item \label{Rem-HYP-i} there exists $\ell_2>0$ such that  for any $\bu \in D(\hsa)$ we have
  \begin{equation*}
   \Lve Q(\bu)\Rve_{\mathcal{L}_2(\rK,\h)}\le \ell_2 (1+ \lvert \bu\rvert),
  \end{equation*}
  \item \label{Rem-HYP-ii} there exists $\ell_3>0$ such that  for any $\bu \in D(\hsa)$ we have
  \begin{equation*}
   \Lve \hsa Q(\bu)\Rve_{\mathcal{L}_2(\rK,\h)}\le \ell_3 (1+ \lvert \hsa \bu\rvert),
  \end{equation*}
 \end{enumerate}
\end{Rem}

\subsection{A priori estimates for the Navier-Stokes equations}\label{AP-EST-NS}

The study of the stochastic Navier-Stokes equations was pioneered by Bensoussan and Temam 
in \cite{Bensoussan-Temam}. Since then, an intense investigation about the qualitative and quantitative  properties of this model has generated an extensive literature, see e.g.   
\cite{Bensoussan, BCF, ZB+SP,IC+AM-1,Flandoli_Gatarek_95, ROZOVSKII1}.

The following definition of solution is mainly taken from \cite{IC+AM-1} (see also \cite{Bensoussan,Flandoli_Gatarek_95}).
\begin{Def}
 A weak solution to \eqref{NSE} is a stochastic process $\bu$ such that 
 \begin{enumerate}
  \item $\bu$ is progressively measurable,
   \item $\bu$ belongs to $C([0,T];\h)\cap L^2(0,T,\ve)$ almost surely,
   \item for all $t\in [0,T]$, almost surely 
   \begin{equation*}
    (\bu(t), \phi)+\nu \int\langle \hsa \bu(s), \hsa \phi\rangle ds+\int_0^t \langle B(\bu(s),\bu(s)), \phi\rangle ds=\bu_0 +\int_0^t\langle \phi,Q(\bu(s))dW(s)\rangle, 
   \end{equation*}
for any $\phi\in \ve$.
 \end{enumerate}
 
\end{Def}
We state the following theorem which was proved in \cite{IC+AM-1} (see also \cite{Bensoussan,Flandoli_Gatarek_95}).
 \begin{thm}
  Let $\bu_0$ be a $\h$-valued $\mathcal{F}_0$-measurable such that $\mathbb{E}\lvert \bu_0\rvert^4<\infty$. Assume that \eqref{HYP-Q} holds. 
  Then \eqref{NSE} has a unique solution $\bu$ in the sense of the above definition. Moreover,  for any $p\in \{2,4\}$ and $T>0$
  there exists $C>0$ such that
  \begin{equation}\label{EST-NSE}
  \mathbf{E}\left(\sup_{t\in [0,T]}\lvert \bu(t)\rvert^p +\int_0^T \lvert \bu(s)\rvert^{p-2}\lvert \hsa \bu(s)\rvert^2 ds \right)<C(1+\lvert \bu_0\rvert^4) 
  \end{equation}
 \end{thm}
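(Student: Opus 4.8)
The estimate \eqref{EST-NSE} is a standard energy-type a priori bound, so my plan is to establish it rigorously at the level of a Galerkin approximation and then pass to the limit (or invoke that the solution inherits the bound uniformly in the approximation index). Concretely, I would work with the finite-dimensional projections $\bu^{(m)} = P_m \bu$ solving the projected system, derive the bounds there with constants independent of $m$, and conclude by lower semicontinuity of the norms under the weak limit. The two cases $p=2$ and $p=4$ are handled in turn, the second building on the first.

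\textbf{Case $p=2$.} I would apply the It\^o formula to $\lvert \bu(t)\rvert^2$. The drift produces the terms $-2\nu \lvert \hsa \bu\rvert^2$ from the Stokes operator and $-2\langle B(\bu,\bu),\bu\rangle$ from the nonlinearity; by the cancellation property \eqref{BIL-B4} the latter vanishes identically. The It\^o correction contributes $\Lve Q(\bu)\Rve^2_{\mathcal{L}_2(\rK,\h)}$, which by Remark \ref{Rem-HYP}\eqref{Rem-HYP-i} is controlled by $c(1+\lvert \bu\rvert^2)$. This yields
\begin{equation*}
 \lvert \bu(t)\rvert^2 + 2\nu\int_0^t \lvert \hsa \bu(s)\rvert^2\, ds \le \lvert \bu_0\rvert^2 + c\int_0^t (1+\lvert \bu(s)\rvert^2)\, ds + M(t),
\end{equation*}
where $M(t)=2\int_0^t \langle \bu(s), Q(\bu(s))\, dW(s)\rangle$ is a martingale. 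Taking the supremum over $[0,T]$ and expectations, I would bound $\me \sup_{t} \lvert M(t)\rvert$ using the Burkholder--Davis--Gundy inequality, estimating its quadratic variation $\int_0^T \lvert \bu\rvert^2 \Lve Q(\bu)\Rve^2\, ds$ and absorbing the resulting $\me \sup_t \lvert \bu(t)\rvert^2$ term into the left-hand side after a Young-type splitting. A Gronwall argument in the remaining integral then gives the $p=2$ bound.

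\textbf{Case $p=4$.} I would apply It\^o to $\lvert \bu(t)\rvert^4$, equivalently to $(\lvert \bu\rvert^2)^2$, obtaining a leading drift $-4\nu \lvert \bu\rvert^2 \lvert \hsa \bu\rvert^2$ (again the trilinear term drops by \eqref{BIL-B4}) together with It\^o correction terms of the form $\lvert \bu\rvert^2 \Lve Q(\bu)\Rve^2$ and $\lvert \langle \bu, Q(\bu)\rangle\rvert^2$, all dominated by $c\,\lvert \bu\rvert^2(1+\lvert \bu\rvert^2)$ via Remark \ref{Rem-HYP}\eqref{Rem-HYP-i}. The structure is identical to the $p=2$ case but weighted by $\lvert \bu\rvert^2$; the BDG estimate is applied to the martingale with quadratic variation $\int_0^T \lvert \bu\rvert^6 \Lve Q(\bu)\Rve^2\, ds$, and Young's inequality is used to absorb $\me\sup_t\lvert\bu\rvert^4$. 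Gronwall's lemma finishes the bound, and the $\lvert \bu_0\rvert^4$ (rather than $\lvert \bu_0\rvert^2$) on the right is exactly what the hypothesis $\me \lvert \bu_0\rvert^4 < \infty$ supplies.

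\textbf{Main obstacle.} The only genuinely delicate point is the supremum-inside-the-expectation: the stochastic integral must be controlled via BDG and the resulting $\me \sup_{t\in[0,T]}\lvert \bu(t)\rvert^p$ carefully absorbed into the left-hand side, which requires splitting the quadratic-variation estimate so that the coefficient of the absorbed term is strictly less than one. Everything else -- the vanishing of the nonlinear contribution by \eqref{BIL-B4}, the linear growth of $Q$ from Remark \ref{Rem-HYP}, and the final Gronwall step -- is routine. I would carry out the estimates on the Galerkin system, where all quantities are finite and the It\^o formula applies without justification, and then obtain \eqref{EST-NSE} for the limit $\bu$ by weak lower semicontinuity and Fatou's lemma.
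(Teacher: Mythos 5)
The paper does not actually prove this theorem: it is quoted verbatim from the literature (Chueshov--Millet, with references also to Bensoussan and Flandoli--Gatarek), so there is no in-paper proof to match against. That said, your sketch of the moment estimate \eqref{EST-NSE} is correct and is precisely the technique the authors themselves deploy for the analogous Leray-$\alpha$ bounds in Proposition \ref{PROp-EST-LAM-0}: Galerkin projection, It\^o's formula for $\lvert\cdot\rvert^2$ and $\lvert\cdot\rvert^4$ with the cancellation \eqref{BIL-B4} killing the trilinear term, the linear-growth bound of Remark \ref{Rem-HYP}\eqref{Rem-HYP-i} on the It\^o correction, Burkholder--Davis--Gundy plus a Young splitting to absorb $\me\sup_t\lvert\bu\rvert^p$, Gronwall, and lower semicontinuity in the limit $m\to\infty$. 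Your identification of the absorption step as the only delicate point, and your handling of the quadratic variation $\int_0^T\lvert\bu\rvert^6\Lve Q(\bu)\Rve^2\,ds$ by factoring out $\sup_s\lvert\bu(s)\rvert^4$, are both sound.

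The one genuine omission is that the theorem asserts existence \emph{and uniqueness} of the weak solution, not merely the a priori bound, and your proposal addresses only the latter. Existence requires passing to the limit in the nonlinear term $B(\bu^{(m)},\bu^{(m)})$, which the energy bounds alone do not give; one needs either a compactness argument (tightness of the laws plus Skorokhod, or fractional-in-time Sobolev estimates) or the local-monotonicity framework of Chueshov--Millet to identify the limit. Uniqueness in 2D then follows from a pathwise Gronwall argument on the difference of two solutions using the bilinear estimate \eqref{BIL-B2} and the Lipschitz assumption on $Q$, possibly after a stopping-time localization of the $\int_0^t\Lve\bu\Rve^2\,ds$ weight. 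Since these are standard but nontrivial steps, and since the paper itself delegates them to the cited references, your write-up should at minimum acknowledge that the estimate is only one of the two assertions being proved.
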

\subsection{A priori estimates for the Leray-$\alpha$ model}\label{AP-EST-LAM}

 The Leray-$\alpha$ model was introduced and analyzed in \cite{CHOT}. Since, then it has been extensively studied; we refer to \cite{Titi} and references therein for a brief historical
description and review of results.
It is worth noticing that the Leray-$\alpha$ model is a particular example of a more general regularization used by Leray in his seminal work, \cite{Leray},  in the context of establishing
the existence of solutions for the 2D and 3D NSE.

The stochastic Leray-$\alpha$ model was studied in \cite{IC+AM-1, IC+AM-2, DEUGOUE2}. For the inviscid case, we refer to the recent work \cite{BBF} where the uniqueness of solutions were investigated. 
The following definition of solutions to \eqref{LAM} is taken from \cite{DEUGOUE2} (see also \cite{IC+AM-1}).
\begin{Def}
 A weak solution to \eqref{LAM} is a stochastic process $\bu^\alpha$ such that 
 \begin{enumerate}
  \item $\bu^\alpha$ is progressively measurable,
   \item $\bv^\alpha$, with $\bv^\alpha:=(I+\alpha^2 \sA) \bua$, belongs to $C([0,T];\h)\cap L^2(0,T,\ve)$ almost surely,
   \item for all $t\in [0,T]$, almost surely 
   \begin{equation*}
    (\bv^\alpha(t), \phi)+\nu \int\langle \hsa \bv^\alpha(s), \hsa \phi\rangle ds+\int_0^t \langle B(\bua(s),\bv^\alpha(s)), \phi\rangle ds=\bv^\alpha_0 +\int_0^t\langle \phi,Q(\bu^\alpha(s))dW(s)\rangle, 
   \end{equation*}
for any $\phi\in \ve$.
 \end{enumerate}
 
\end{Def}
We state the following theorem which was proved in \cite{IC+AM-1} (see also \cite{DEUGOUE2}).
 \begin{thm}
  Let $\bu_0$ be a $D(\sA)$-valued $\mathcal{F}_0$-measurable such that $\mathbb{E}\lvert \sA \bu_0\rvert^4<\infty$. Assume that \eqref{HYP-Q} holds. 
  Then for any $\alpha>0$ the system \eqref{LAM} has a unique solution $\bua$ in the sense of the above definition. Moreover,  for any $p\in \{2,4\}$ and $T>0$
  there exists $C>0$ such that
  \begin{equation}\label{EST-LAM-A0}
  \mathbf{E}\left(\sup_{t\in [0,T]}\lvert \bv^\alpha(t)\rvert^p +\int_0^T \lvert \bv^\alpha(s)\rvert^{p-2}\lvert \hsa \bv^\alpha(s)\rvert^2 ds \right)<C(1+\lvert(I+\alpha^2 \sA) \bu_0\rvert^4) 
  \end{equation}
 \end{thm}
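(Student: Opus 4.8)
The plan is to combine the Galerkin method with It\^o's formula and a stochastic Gronwall argument, deriving the bound \eqref{EST-LAM-A0} first on finite-dimensional approximations and then passing to the limit; since well-posedness is by now classical, most of the effort goes into the estimate. Note first that the assumption $\bu_0\in D(\sA)$ is exactly what makes $\bva_0:=(\Id+\alpha^2\sA)\bu_0$ an element of $\h$, and $\me\lvert\sA\bu_0\rvert^4<\infty$ makes the right-hand side of \eqref{EST-LAM-A0} finite. For the scheme, using that the eigenfunctions $\{\Psi_i\}$ of $\sA$ form the basis, I look for $\bvan:=\sum_{i=1}^n y_n^i(t)\Psi_i\in\h_n$ solving $d\bvan+[\nu\sA\bvan+P_nB(\buna,\bvan)]\,dt=P_nQ(\buna)\,dW$ with $\buna:=\ipsa\bvan$ and $\bvan(0)=P_n\bva_0$. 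Since $\ipsa$ and $P_n$ are both functions of $\sA$ they commute, so $\buna\in\h_n$; the drift is then locally Lipschitz with polynomial growth on $\h_n$ while $Q$ is globally Lipschitz by \eqref{HYP-Q}, so this finite-dimensional It\^o system has a unique local solution which the estimates below prolong to $[0,T]$.

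Next I would derive the energy estimates at the Galerkin level. For $p=2$, applying It\^o's formula to $\lvert\bvan\rvert^2$ yields the dissipative term $-2\nu\lvert\hsa\bvan\rvert^2$, while the nonlinear term disappears entirely because $\langle P_nB(\buna,\bvan),\bvan\rangle=\langle B(\buna,\bvan),\bvan\rangle=0$ by \eqref{BIL-B4}; the It\^o correction $\Lve P_nQ(\buna)\Rve_{\mathcal{L}_2(\rK,\h)}^2$ is controlled, using that $\ipsa$ is a contraction on $\h$ (so $\lvert\buna\rvert\le\lvert\bvan\rvert$) together with Remark \ref{Rem-HYP}, by $C(1+\lvert\bvan\rvert^2)$. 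After taking the supremum in time and then expectations, the only delicate term is the martingale $\int_0^t\langle\bvan,Q(\buna)\,dW\rangle$, which I bound by the Burkholder-Davis-Gundy inequality and split via Young's inequality so that a fraction of $\me\sup_{s\le t}\lvert\bvan\rvert^2$ is absorbed on the left; Gronwall's lemma closes the $p=2$ bound with a constant independent of $n$ and of $\alpha$.

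The main obstacle is the $p=4$ estimate. Applying It\^o's formula to $\lvert\bvan\rvert^4$, the cubic nonlinear contribution again vanishes by the same cancellation, the viscosity produces the good term $4\nu\lvert\bvan\rvert^2\lvert\hsa\bvan\rvert^2$, and the second-order It\^o terms are dominated by $C\lvert\bvan\rvert^2\Lve Q(\buna)\Rve_{\mathcal{L}_2(\rK,\h)}^2\le C(1+\lvert\bvan\rvert^4)$. The genuinely delicate point is the supremum of the martingale $\int_0^t\lvert\bvan\rvert^2\langle\bvan,Q(\buna)\,dW\rangle$: its quadratic variation is bounded by $\int_0^t\lvert\bvan\rvert^6\Lve Q(\buna)\Rve_{\mathcal{L}_2(\rK,\h)}^2\,ds$, and the right manoeuvre is to factor $\sup_{s\le t}\lvert\bvan(s)\rvert^2$ out of the Burkholder-Davis-Gundy bound and use Young's inequality to split it into $\frac{1}{2}\me\sup_{s\le t}\lvert\bvan\rvert^4$, absorbed on the left, plus $C\me\int_0^t(1+\lvert\bvan\rvert^4)\,ds$; Gronwall then yields \eqref{EST-LAM-A0} at the Galerkin level, uniformly in $n$ and $\alpha$.

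Finally I would pass to the limit. The uniform estimates give weak-$\ast$ compactness of $(\bvan)$ in $L^p(\Omega;L^\infty(0,T;\h))\cap L^2(\Omega\times(0,T);\ve)$, and \eqref{EST-LAM-A0} survives in the limit by weak lower semicontinuity of the norms. The remaining difficulty is to identify the nonlinear limit, which requires compactness in a strong topology; I would obtain a solution either by the stochastic compactness/Skorokhod approach or, exploiting the two-dimensional bilinear estimate \eqref{BIL-B2} to prove pathwise uniqueness via an energy estimate on the difference of two solutions, by the Gy\"ongy-Krylov characterization of convergence in probability, which simultaneously delivers existence and uniqueness of the strong solution.
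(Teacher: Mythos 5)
Your proposal is correct and follows essentially the same route the paper relies on: the paper itself states this theorem with a citation to Chueshov--Millet and Deugou\'e--Sango rather than proving it, but the Galerkin scheme \eqref{GAL-AP-LAM} together with It\^o's formula applied to $\lvert\bvan\rvert^4$, the cancellation $\langle B(\buna,\bvan),\bvan\rangle=0$, the Burkholder--Davis--Gundy/Young absorption of the martingale supremum, and Gronwall is exactly the argument the authors carry out for the closely related Proposition \ref{PROp-EST-LAM-0}. Your treatment of the $p=4$ martingale term and the passage to the limit by lower semicontinuity (with Gy\"ongy--Krylov or Skorokhod for identification) is the standard and correct way to complete it.
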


 As stated in \cite{DEUGOUE2},  the constant $C$ above depends on $\alpha$ and may explode as $\alpha$ tends to zero.  The uniform estimates (wrt $\alpha$) obtained in
 \cite{DEUGOUE2} are not helpful  for our analysis. Our aim in this subsection is to derive several a priori estimates for the stochastic Leray-$\alpha$ model \eqref{LAM}.
These estimates summarized in the following two proposition, will be used
in Section \ref{RATE} to derive a rate of convergence of the stochastic Leray-$\alpha$ model to the stochastic Navier-Stokes
equations.

We start with some estimates in the weak norms of the solution $\bua$, these are refinements of estimates obtained in \cite{DEUGOUE2}.
\begin{prop}\label{PROp-EST-LAM-0}
 Let $\bu_0$ be a $\mathcal{F}_0$-measurable random variable such that $\me \lvert \bu_0+\sA\bu_0\rvert^4<\infty.$ Assume that
 the set of hypotheses stated in Assumption \ref{HYP-Q} holds. Then, there exists a constant $C>0$ such that for any $\alpha\in (0,1)$
we have
 \begin{align}
   \me \sup_{t\in [0,T] } \biggl[\lvert \bu^\alpha(t)\rvert^2+2\alpha^2 \lvert \hsa \bu^\alpha(t) \rvert^2
   +\alpha^4 \lvert \sA\bua(t)\rvert^2\biggr]^2 \le \mathfrak{K}_0,\label{WAP-EST-LAM-1}\\
    \me \int_0^T \lvert \hsa \bu^\alpha (s)\rvert^2 \lvert \bu^\alpha(s)\rvert^2 ds\le \mathfrak{K}_0, \label{WAP-EST-LAM-2}\\
   2 \alpha^2 \me \int_0^T\lvert \bua(s)\rvert^2\Big( \lvert \sA\bu^\alpha (s)\rvert^2+\frac{\alpha^2}{2} \lvert \sA^\frac32 \bua(s)\rvert^2 \Big) ds\le \mathfrak{K}_0 \label{WAP-EST-LAM-3},\\
   2 \alpha^2 \me \int_0^T \lvert \hsa \bu^\alpha(s)\rvert^2\Big( \lvert \hsa \bu^\alpha(s)\rvert^2+2\alpha^2 \lvert \sA \bu^\alpha(s)\rvert^2+ \alpha^4 \lvert \sA^\frac32 \bu^\alpha(s)\rvert^2 \Big)ds\le \mathfrak{K}_0, \label{WAP-EST-LAM-4}\\
  \alpha^4 \me \int_0^T \lvert \sA\bu^\alpha(s) \rvert^2\Big( \lvert \hsa \bu^\alpha(s)\rvert^2+2\alpha^2 \lvert \sA \bu^\alpha(s)\rvert^2+ \alpha^4 \lvert \sA^\frac32 \bu^\alpha(s)\rvert^2 \Big)
  ds \le \mathfrak{K}_0,
\label{WAP-EST-LAM-5}
  \end{align}
  where $$\mathfrak{K}_0:=\left(\me \lvert \bu_0+\sA \bu_0\rvert^4+CT\right) \left(1+Ce^{CT}\right).$$
\end{prop}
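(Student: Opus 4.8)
The plan is to reduce every one of the five estimates to a single scalar quantity, namely $\lvert\bva\rvert^2$, and then to run Itô's formula. First I would record the two algebraic identities that follow from $\bva=(\Id+\alpha^2\sA)\bua$ and the self-adjointness of the fractional powers of $\sA$:
\begin{align*}
\lvert\bva\rvert^2 &= \lvert\bua\rvert^2+2\alpha^2\lvert\hsa\bua\rvert^2+\alpha^4\lvert\sA\bua\rvert^2,\\
\lvert\hsa\bva\rvert^2 &= \lvert\hsa\bua\rvert^2+2\alpha^2\lvert\sA\bua\rvert^2+\alpha^4\lvert\sA^\frac32\bua\rvert^2.
\end{align*}
The first identity shows that the bracket in \eqref{WAP-EST-LAM-1} is exactly $\lvert\bva\rvert^2$, so \eqref{WAP-EST-LAM-1} is the claim $\me\sup_{t\le T}\lvert\bva(t)\rvert^4\le\mathfrak{K}_0$; the second shows that the common bracket in \eqref{WAP-EST-LAM-4}--\eqref{WAP-EST-LAM-5} equals $\lvert\hsa\bva\rvert^2$. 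Crucially, the regularity supplied by the existence theorem, $\bva\in C([0,T];\h)\cap L^2(0,T;\ve)$, is precisely what controls $\sup_t\lvert\sA\bua\rvert$ and $\int_0^T\lvert\sA^\frac32\bua\rvert^2\,ds$, so all quantities above are a.s. finite without any further regularity.

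Next I would apply Itô's formula to $\lvert\bva\rvert^2$ along \eqref{LAM-1}. The key point is that the nonlinearity drops out: by \eqref{BIL-B4} with $\bu=\bua$, $\bv=\bva$ one has $\langle B(\bua,\bva),\bva\rangle=0$, leaving
\begin{equation*}
d\lvert\bva\rvert^2+2\nu\lvert\hsa\bva\rvert^2\,dt=\Lve Q(\bua)\Rve_{\mathcal{L}_2(\rK,\h)}^2\,dt+2\langle\bva,Q(\bua)dW\rangle.
\end{equation*}
Applying Itô once more to $(\lvert\bva\rvert^2)^2$ then gives
\begin{equation*}
\lvert\bva(t)\rvert^4+4\nu\int_0^t\lvert\bva\rvert^2\lvert\hsa\bva\rvert^2\,ds=\lvert\bva(0)\rvert^4+\int_0^t\Big[2\lvert\bva\rvert^2\Lve Q(\bua)\Rve_{\mathcal{L}_2(\rK,\h)}^2+4\Lve Q(\bua)^\ast\bva\Rve_\rK^2\Big]ds+4\int_0^t\lvert\bva\rvert^2\langle\bva,Q(\bua)dW\rangle.
\end{equation*}
To close \eqref{WAP-EST-LAM-1}, I would bound the drift corrections by Remark \ref{Rem-HYP}\eqref{Rem-HYP-i} and the elementary inequality $\lvert\bua\rvert\le\lvert\bva\rvert$ (immediate from the first identity), obtaining a bound $C(1+\lvert\bva\rvert^4)$. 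After taking $\sup_{t\le T}$ and expectation, the Burkholder--Davis--Gundy inequality controls the stochastic integral by $C\,\me\big(\int_0^T\lvert\bva\rvert^6\Lve Q(\bua)\Rve_{\mathcal{L}_2(\rK,\h)}^2\,ds\big)^{1/2}$, which Young's inequality splits as $\tfrac12\me\sup_{t\le T}\lvert\bva\rvert^4+C\me\int_0^T(1+\lvert\bva\rvert^4)\,ds$; the first summand is absorbed on the left. A Gronwall argument then yields \eqref{WAP-EST-LAM-1}, the shape of $\mathfrak{K}_0$ reflecting the factor $e^{CT}$ and the bound $\me\lvert\bva(0)\rvert^4\le\me\lvert\bu_0+\sA\bu_0\rvert^4$, which holds for $\alpha\in(0,1)$.

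For the four remaining estimates I would return to the second Itô identity, take expectations (the martingale vanishing after the standard stopping-time localization), and combine \eqref{WAP-EST-LAM-1} with the drift bounds to get $4\nu\,\me\int_0^T\lvert\bva\rvert^2\lvert\hsa\bva\rvert^2\,ds\le C\mathfrak{K}_0$. The decisive step is then purely algebraic: expanding the product via the two identities gives
\begin{equation*}
\lvert\bva\rvert^2\lvert\hsa\bva\rvert^2=\lvert\bua\rvert^2\lvert\hsa\bua\rvert^2+2\alpha^2\lvert\bua\rvert^2\Big(\lvert\sA\bua\rvert^2+\tfrac{\alpha^2}{2}\lvert\sA^\frac32\bua\rvert^2\Big)+2\alpha^2\lvert\hsa\bua\rvert^2\lvert\hsa\bva\rvert^2+\alpha^4\lvert\sA\bua\rvert^2\lvert\hsa\bva\rvert^2,
\end{equation*}
whose four summands are exactly the nonnegative integrands of \eqref{WAP-EST-LAM-2}, \eqref{WAP-EST-LAM-3}, \eqref{WAP-EST-LAM-4} and \eqref{WAP-EST-LAM-5}. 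Since each summand is dominated by the whole, all four time-integrated estimates follow at once from the single bound on $\me\int_0^T\lvert\bva\rvert^2\lvert\hsa\bva\rvert^2\,ds$.

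The main obstacle is not any single computation but the rigorous justification of the two Itô applications and, above all, the uniformity of the constants in $\alpha$ (in contrast with the $\alpha$-dependent constant of \eqref{EST-LAM-A0}). I would therefore carry out all the above on the Galerkin system, where $\bva$ is smooth and the pairings $\langle\bva,\sA\bva\rangle=\lvert\hsa\bva\rvert^2$ are literal, derive the identities and inequalities with constants independent of both $n$ and $\alpha$, and only then pass to the limit using lower semicontinuity of the norms (Fatou). The uniformity in $\alpha$ is exactly what keeping the full combination $\lvert\bva\rvert^2$ intact --- rather than splitting it into its three constituents --- delivers.
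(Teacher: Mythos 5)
Your proposal is correct and follows essentially the same route as the paper: Itô's formula for $\lvert \bva_n\rvert^4$ on the Galerkin system (your two-step application to $\lvert\bva\rvert^2$ and then its square is equivalent to the paper's direct use of $\Psi=\lvert\cdot\rvert^4$ with its Fr\'echet derivatives), cancellation of the nonlinearity via \eqref{BIL-B4}, the bound $\lvert\bua\rvert\le\lvert\bva\rvert$ with Assumption \ref{HYP-Q}, Burkholder--Davis--Gundy plus Gronwall, passage to the limit by lower semicontinuity, and finally the same algebraic expansion of $\lvert\bva\rvert^2\langle\sA\bva,\bva\rangle$ to read off the five estimates.
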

\begin{proof}
  For any positive integer $n\in \mathbb{N}$, we will consider the Galerkin approximation of \eqref{LAM} which is a system of
 SDEs in $\h_n$
 \begin{subequations}\label{GAL-AP-LAM}
  \begin{align}
    d\bv_n^\alpha(t) +[\nu\sA\bv_n^\alpha(t) +B(\bu_n^\alpha(t), \bv_n^\alpha(t))]dt= P_n Q{(\buna(t))}dW(t), \, t\in (0,T]
    \label{GLAM-1}\\
 \bu_n^\alpha+\alpha^2 \sA\bu_n^\alpha=\bv_n^\alpha,\label{GLAM-2}\\
 \bu_n^\alpha(0)=\bu_{0n},
  \end{align}
 \end{subequations}
where $\bu_{0n}=P_n \bu_0$. Let $\Psi(\cdot)$ be a mapping defined
on $\h_n$ defined by $\Psi(\cdot):=\lvert \cdot \rvert^4$. The
mapping $\Psi(\cdot)$ is twice Fr\'echet differentiable with first
and second derivative defined by
\begin{align*}
 \Psi^\prime(\bu)[\mathbf{f}]= & 4 \lvert \bu\rvert^2 \langle \bu, \mathbf{f}\rangle,\\
 \Psi^{\prime \prime}(\bu)[\mathbf{f},\mathbf{g}]=  & 4 \lvert \bu\rvert^2 \langle \mathbf{g}, \mathbf{f}\rangle+8 \langle  \bu, \mathbf{g}\rangle \langle \bu, \mathbf{f}\rangle,
\end{align*}
for any $\bu, \mathbf{f}, \mathbf{g} \in \h_n$. In particular, the last identity implies that
\begin{equation*}
 \Psi^{\prime \prime}(\bu)[\mathbf{f},\mathbf{f}]\le 12 \lvert \bu\rvert^2 \lvert \mathbf{f}\rvert^2,
\end{equation*}
for any $\bu, \mathbf{f}\in \h_n$. Therefore by It\^o's formula to
$\Psi(\bva_n):=\lvert \bva_n(t)\rvert^4 $ we obtain
\begin{equation*}
 \begin{split}
  d \lvert \bva_n(t)\rvert^4 +4 \lvert \bva_n(t)\rvert^2 \biggl[\nu  \langle \sA \bva_n(t)+B(\bua_n(t), \bva_n(t)), \bva_n(t)\rangle \biggr]dt\\
  \le C \lvert \bva_n(t) \rvert^2
  \Lve Q{(\bua_n(t))}\Rve^2_{\mathcal{L}_2(\rK,\h)} dt + 4\lvert \bva_n(t)\rvert^2 \langle \bva_n(t), P_n Q{(\bua_n(t))} dW(t)\rangle.
 \end{split}
\end{equation*}
By using the identity \eqref{BIL-B4}, the Cauchy's inequality and Assumption \ref{HYP-Q}-\eqref{HYP-Q-i} along with Remark \ref{Rem-HYP}-\eqref{Rem-HYP-i} we infer the existence of a constant $c>0$ such that
\begin{equation}\label{WAP-EST}
 \begin{split}
  d \lvert \bva_n(t)\rvert^4 +4\nu \lvert \bva_n(t)\rvert^2 \langle \sA \bva_n(t), \bva_n(t)\rangle dt\le c \lvert \bva_n(t) \rvert^4 dt + c\, dt
  \\ + 4\lvert \bva_n(t)\rvert^2 \langle \bva_n(t), P_n Q{(\bua_n(t))} dW(t)\rangle.
 \end{split}
\end{equation}
Since, by definition of $\bva$,  $\lvert \bua\rvert \le c \lvert \bva \rvert$ we deduce from Assumption \ref{HYP-Q}-\eqref{HYP-Q-i} that
$$  \Lve Q{(\bua_n(s))}\Rve^2_{\mathcal{L}_2(\rK,\h)} \le c (1+\lvert \bva\rvert)^2 .$$
Now, using Burkholder-Davis-Gundy and Cauchy-Schwarz inequalities, we deduce that
\begin{align*}
 \me \sup_{s \in [0,t]}\biggl\vert \int_0^s \lvert \bva_n(s)\rvert^2 \langle \bva_n(s), P_n Q{(\bua_n(s))} dW(s)\rangle\biggr\vert \le & c  \me \biggl(\int_0^t
 \lvert \bva_n(s)\rvert^4 \lvert \bva_n(s)\rvert^2 \Lve Q{(\bua_n(s))}\Rve^2_{\mathcal{L}_2(\rK,\h)} ds\biggr)^\frac12,\\
 \le & c \me \biggl[\sup_{s\in [0,t]}\lvert \bva_n(s)\rvert^2 \biggl(\int_0^t c(1+\lvert \bva_n(s)\rvert)^4 ds\biggr)^\frac12\biggr]\\
 \le & \frac12 \me \sup_{s\in [0,t]} \lvert \bva_n(s)\rvert^4+c T+ c \me \int_0^t \lvert \bva_n(s)\rvert^4 ds.
\end{align*}
From this last estimate and \eqref{WAP-EST} we derive that there exists $C>0$ such that
\begin{equation*}
 \begin{split}
\me \sup_{s\in [0,t]}\lvert \bva_n(s)\rvert^4 +8\nu \me \int_0^t \lvert \bva_n(s) \rvert^2 \langle \sA\bva_n(s), \bva_n(s)\rangle ds\le  \me \lvert \bva(0)\rvert^4+CT +C
\me \int_0^t \lvert \bva_n(s)\rvert^2 ds.
 \end{split}
\end{equation*}
Since $\langle \sA\bva_n(s),\bva_n(s)\rangle =\lvert \hsa \bva_n(s)\rvert^2$ is nonnegative, and using  the Gronwall's inequality, we deduce that
\begin{equation*}
 \me \sup_{s\in [0,t]}\lvert \bva_n(s)\rvert^4 +8\nu \me \int_0^t \lvert \bva_n(s) \rvert^2 \langle \sA\bva_n(s), \bva_n(s)\rangle ds\le (\me \lvert \bva(0)\rvert^4+CT )(1+Ce^{CT})
\end{equation*}
Since $\alpha$ tend to 0, we can assume that $\alpha\in (0,1)$. Therefore, by lower semicontinuity of the norm and the fact that $\lvert \bva(0)\rvert^4\le \lvert \bu_0+\sA\bu_0\rvert^4$, we infer that as $n\rightarrow \infty$
\begin{equation}\label{WAP-GAL-EST-Fin}
 \me \sup_{s\in [0,t]}\lvert \bva(s)\rvert^4 +8\nu \me \int_0^t \lvert \bva(s) \rvert^2 \langle \sA\bva(s), \bva(s)\rangle ds\le \mathfrak{K}_0
\end{equation}
where $\mathfrak{K}_0:=(\me \lvert \bu_0+\sA\bu_0\rvert^4+CT )(1+Ce^{CT})$. Since
\begin{align*}
 \lvert \bva\rvert^2=\lvert \bua\rvert^2+2\alpha^2 \lvert \hsa \bua\rvert^2+\alpha^4 \lvert \sA\bua\rvert^2,\\
 \langle \sA\bva, \bva\rangle= \lvert \hsa \bua\rvert^2+2\alpha^2 \lvert \sA\bua\rvert^2+\alpha^4 \lvert \sA^\frac32 \bua\rvert^2,
\end{align*}
we deduce from \eqref{WAP-GAL-EST-Fin} that the five estimates \eqref{WAP-EST-LAM-1}-\eqref{WAP-EST-LAM-5} hold.
\end{proof}
As a consequence of the estimate \eqref{WAP-EST-LAM-2} and the Gagliardo-Nirenberg's inequality
$$\Lve \bv \Rve_{\el^4}\le c \lvert \bv \rvert^\frac12 \lvert \hsa \bv \rvert^\frac12,$$ we state the following corollary.
\begin{cor}\label{COR-WAP}
 Under the assumptions of Proposition \ref{PROp-EST-LAM-0} there exists $C>0$ such that for any $\alpha \in (0,1)$ we have
 \begin{equation}\label{WAP-EST-LAM-6}
  \me \int_0^T \Lve \bua(s)\Rve^4 ds \le (\me \lvert \bu_0+\sA\bu_0\rvert^4+CT )(1+Ce^{CT}).
 \end{equation}
\end{cor}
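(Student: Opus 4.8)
The plan is to read the bound off directly from the a priori estimate \eqref{WAP-EST-LAM-2}, using the pointwise-in-time Gagliardo--Nirenberg inequality as the only additional ingredient. A preliminary remark on the notation is in order: the quantity to be controlled here is the spatial $\el^4(\MO)$-norm of $\bua(s)$ raised to the fourth power, i.e. $\Lve \bua(s)\Rve_{\el^4}^4$. This is the only reading for which the conclusion can be uniform in $\alpha$: if $\Lve\cdot\Rve$ were the $\ve$-norm, then $\Lve \bua\Rve^4=\lvert \hsa \bua\rvert^4$, and controlling $\me\int_0^T\lvert \hsa \bua(s)\rvert^4\,ds$ would force one to invoke \eqref{WAP-EST-LAM-4}--\eqref{WAP-EST-LAM-5}, whose right-hand sides carry prefactors $\alpha^2$ and $\alpha^4$ and hence degenerate as $\alpha\to 0$.

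First I would fix $s\in[0,T]$ and apply the stated inequality $\Lve \bv\Rve_{\el^4}\le c\,\lvert \bv\rvert^{1/2}\lvert \hsa \bv\rvert^{1/2}$ to $\bv=\bua(s)$, then raise both sides to the fourth power to obtain the pointwise bound
\begin{equation*}
\Lve \bua(s)\Rve_{\el^4}^4 \le c^4\, \lvert \bua(s)\rvert^2\, \lvert \hsa \bua(s)\rvert^2 .
\end{equation*}
This is legitimate because the solution furnished by Proposition \ref{PROp-EST-LAM-0} satisfies $\bua(s)\in \ve=D(\hsa)$ for almost every $s$, so the right-hand side is almost surely finite and belongs to $L^1(0,T)$ after the estimate below.

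Next I would integrate this inequality over $[0,T]$ and take expectations, interchanging $\me$ and $\int_0^T$ by Tonelli's theorem since the integrand is nonnegative. This yields
\begin{equation*}
\me \int_0^T \Lve \bua(s)\Rve_{\el^4}^4\, ds \le c^4\, \me \int_0^T \lvert \hsa \bua(s)\rvert^2\, \lvert \bua(s)\rvert^2\, ds,
\end{equation*}
and the right-hand side is precisely the expression bounded by $\mathfrak{K}_0$ in \eqref{WAP-EST-LAM-2}. Absorbing the harmless factor $c^4$ into the generic constant $C$ inside $\mathfrak{K}_0=(\me\lvert \bu_0+\sA\bu_0\rvert^4+CT)(1+Ce^{CT})$ delivers the claimed estimate, uniformly in $\alpha\in(0,1)$.

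I do not expect any genuine obstacle in this corollary: the entire analytic difficulty has already been absorbed into Proposition \ref{PROp-EST-LAM-0}, and what remains is a one-line consequence. The only points requiring minor care are the bookkeeping of the constant $c^4$ and the correct identification of the norm as the $\el^4$ spatial norm, which, as noted above, is forced upon us by the requirement of uniformity in $\alpha$.
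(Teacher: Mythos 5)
Your proof is correct and follows exactly the route the paper intends: the sentence preceding the corollary states that it is "a consequence of the estimate \eqref{WAP-EST-LAM-2} and the Gagliardo--Nirenberg inequality $\Lve \bv \Rve_{\el^4}\le c \lvert \bv \rvert^{1/2} \lvert \hsa \bv \rvert^{1/2}$," which is precisely your pointwise bound followed by integration, Tonelli, and absorption of $c^4$ into the generic constant. Your remark that $\Lve\cdot\Rve$ must here be read as the $\el^4(\MO)$-norm is also the right reading (and is confirmed by the later use of \eqref{WAP-EST-LAM-6} to bound $\me\int_0^T\Vert \bu^{\alpha_n}(s)\Vert_{\el^4}^4\,ds$ in the proof of Theorem \ref{ORDER-PR}).
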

Now we state several important estimates for the norm of $\bua$ in stronger norms.
\begin{prop}\label{PROP-EST-LAM}
 Let Assumption \ref{HYP-Q} holds and let $\bu_0$ be a $\mathcal{F}_0$-measurable random variable such that
 $\me(\vert \hsa \bu_0\rvert^2+\vert \sA \bu_0\vert^2)^2<\infty$. Then, there exists a generic constant $K_0>0$ such that for any $\alpha \in (0,1)$ we have
  \begin{align}
   \me \sup_{t\in [0,T] } \biggl[\lvert \hsa \bu^\alpha(t)\rvert^4+\alpha^4 \lvert \sA \bu^\alpha(t) \rvert^4
   +2\alpha^2 \lvert \hsa \bu^\alpha(t) \rvert^2\lvert \sA\bu(t)\rvert^2\biggr] \le K_0,\label{AP-EST-LAM-1}\\
    \me \int_0^T \lvert \hsa \bu^\alpha (s)\rvert^2 \lvert \sA\bu^\alpha(s)\rvert^2 ds\le K_0, \label{AP-EST-LAM-2}\\
   \alpha^2 \me \int_0^T \lvert \sA\bu^\alpha (s)\rvert^4 ds\le K_0 \label{AP-EST-LAM-3},\\
    \alpha^2 \me \int_0^T \lvert \hsa \bu^\alpha(s)\rvert^2 \lvert \sA^\frac32 \bu^\alpha(s)\rvert^2 ds\le K_0, \label{AP-EST-LAM-4}\\
  \alpha^4 \me \int_0^T \lvert \sA\bu^\alpha(s) \rvert^2 \lvert \sA^\frac32 \bu^\alpha(s)\rvert^2 ds \le K_0,
\label{AP-EST-LAM-5}
  \end{align}
 where $$K_0:= [2\me(\vert \hsa \bu_0\rvert^2+\vert \sA \bu_0\vert^2)^2+CT]\cdot [1+C e^{CT}].  $$
\end{prop}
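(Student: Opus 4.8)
The plan is to mirror the proof of Proposition \ref{PROp-EST-LAM-0}, but to apply It\^o's formula to a functional tailored to the right-hand sides of \eqref{AP-EST-LAM-1}--\eqref{AP-EST-LAM-5} rather than to $\Lve\bvan\Rve^4$. Working with the Galerkin system \eqref{GAL-AP-LAM}, I would set
$$E_n(t):=\langle \sA\buna(t),\bvan(t)\rangle=\lvert\hsa\buna(t)\rvert^2+\alpha^2\lvert\sA\buna(t)\rvert^2,$$
where the second equality uses \eqref{GLAM-2}, and apply It\^o's formula to $\Phi(\bvan):=E_n^2=\langle M_\alpha\bvan,\bvan\rangle^2$, where $M_\alpha:=\sA\ipsa$ is bounded, self-adjoint and nonnegative on $\h$ and satisfies $M_\alpha\bvan=\sA\buna$. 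Two features make this the correct choice. First, $E_n^2=\lvert\hsa\buna\rvert^4+2\alpha^2\lvert\hsa\buna\rvert^2\lvert\sA\buna\rvert^2+\alpha^4\lvert\sA\buna\rvert^4$ is exactly the bracket in \eqref{AP-EST-LAM-1}. Second, $E_n(0)=\lvert\hsa\bu_{0n}\rvert^2+\alpha^2\lvert\sA\bu_{0n}\rvert^2\le\lvert\hsa\bu_0\rvert^2+\lvert\sA\bu_0\rvert^2$ for $\alpha\in(0,1)$, so the initial datum is controlled by the hypothesis uniformly in $n$, whereas $\Lve\bvan\Rve^4$ would require control of $\lvert\sA^\frac32\bu_0\rvert$, which is not assumed.

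Computing the drift produced by It\^o's formula, the dissipative part is $-4\nu E_n\langle\sA\buna,\sA\bvan\rangle=-4\nu E_n(\lvert\sA\buna\rvert^2+\alpha^2\lvert\sA^\frac32\buna\rvert^2)$, which is nonnegative once moved to the left. The decisive point is that the nonlinear contribution vanishes identically: using \eqref{GLAM-2} to write $\bvan=\buna+\alpha^2\sA\buna$,
$$\langle\sA\buna,B(\buna,\bvan)\rangle=\langle B(\buna,\buna),\sA\buna\rangle+\alpha^2\langle B(\buna,\sA\buna),\sA\buna\rangle=0,$$
the first term being zero by \eqref{BIL-B5} and the second by \eqref{BIL-B4} applied with $\bv=\sA\buna$. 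Thus, unlike in a naive enstrophy estimate, no nonlinear term must be absorbed into the dissipation, which is exactly what keeps the resulting constant uniform in $\alpha$.

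It remains to treat the It\^o correction and the stochastic integral. The correction term equals $4\sum_k\langle\sA\buna,P_nQ(\buna)e_k\rangle^2+2E_n\sum_k\langle M_\alpha P_nQ(\buna)e_k,P_nQ(\buna)e_k\rangle$; bounding $\langle\sA\buna,\cdot\rangle=\langle\hsa\buna,\hsa\cdot\rangle$ by Cauchy--Schwarz, using $M_\alpha\le\sA$ in the quadratic-form sense, and invoking Remark \ref{Rem-HYP}-\eqref{Rem-HYP-ii} together with $\lvert\hsa\buna\rvert^2\le E_n$, one obtains a bound of the form $C(1+E_n^2)$. The stochastic integral $4\int_0^\cdot E_n\langle\sA\buna,P_nQ(\buna)dW\rangle$ is handled by Burkholder--Davis--Gundy exactly as in Proposition \ref{PROp-EST-LAM-0}: its supremum is dominated by $\tfrac12\me\sup_{s\le t}E_n^2$ plus $C\me\int_0^t(1+E_n^2)\,ds$, the first piece being absorbed on the left (which is the source of the factor $2$ in $K_0$).

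Gronwall's lemma then yields
$$\me\sup_{t\le T}E_n^2+4\nu\,\me\int_0^T E_n\bigl(\lvert\sA\buna\rvert^2+\alpha^2\lvert\sA^\frac32\buna\rvert^2\bigr)\,ds\le K_0,$$
and lower semicontinuity of the norms together with Fatou's lemma as $n\to\infty$ transfers this to $\bua$. Finally, \eqref{AP-EST-LAM-1} is read off from the identity for $E^2$, while expanding the product $E\,(\lvert\sA\bua\rvert^2+\alpha^2\lvert\sA^\frac32\bua\rvert^2)$ into its four nonnegative summands $\lvert\hsa\bua\rvert^2\lvert\sA\bua\rvert^2$, $\alpha^2\lvert\sA\bua\rvert^4$, $\alpha^2\lvert\hsa\bua\rvert^2\lvert\sA^\frac32\bua\rvert^2$ and $\alpha^4\lvert\sA\bua\rvert^2\lvert\sA^\frac32\bua\rvert^2$ yields \eqref{AP-EST-LAM-2}--\eqref{AP-EST-LAM-5} respectively. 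The only genuine difficulty is the choice of the functional $E_n^2$ and the verification of the nonlinear cancellation; once these are in place the remaining steps are routine.
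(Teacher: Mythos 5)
Your proposal is correct and follows essentially the same route as the paper: the paper likewise works with the Galerkin system, applies It\^o's formula to the functional $\Phi(\buna)=\langle \sA\buna,\bvan\rangle=\lvert\hsa\buna\rvert^2+\alpha^2\lvert\sA\buna\rvert^2$ and then to its square $[y^\alpha_n]^2$, relies on the same nonlinear cancellation $\langle B(\buna,\bvan),\sA\buna\rangle=0$ via \eqref{BIL-B4}--\eqref{BIL-B5}, and closes with Burkholder--Davis--Gundy, Gronwall and lower semicontinuity. The only cosmetic difference is that you apply It\^o directly to $E_n^2$ in one pass, whereas the paper first derives the first-moment bound for $y^\alpha_n$ and then squares, splitting the cancellation into the two identities \eqref{GLAM-6} and \eqref{GLAM-8}.
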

\begin{proof}
As in the proof of Proposition \ref{PROp-EST-LAM-0} we still consider the soltuion $\bva_n$ (or $\bua_n$) of the $n$-th Galerkin approximation of \eqref{LAM} defined by the system of SDEs
\eqref{GAL-AP-LAM}. Let $\na$ be the self-adjoint and positve definite operator defined by $\na \bv =\ipsa \bv $ for any $\bv \in \h$. It is well-known that $\na^{-1}$ with domain $D(\sA)$
is also positive definite and self-adjoint on $\h$. Thus, the fractional powers $\na^\frac12 $ and $\na^{-\frac12}$ are also self-adjoint.
Since $\bva=\na^{-1}\bua$ it follows from \eqref{GLAM-1} that
\begin{equation}\label{GLAM-3}
d\bu_n^\alpha(t) +[\nu\sA\bu_n^\alpha(t) +\na B(\bu_n^\alpha(t), \bv_n^\alpha(t))]dt= \na P_n Q{(\bua_n(t))}dW(t),
\end{equation}

Let $\Phi:D(\sA)\rightarrow [0,\infty)$ be the mapping defined by $\Phi(\bv)=\langle \sA\bv, \na^{-1}\bv\rangle$ for any $\bv \in D(\sA)$. It is not difficut to show that $\Phi(\cdot)$ is
twice Fr\'echet differentiable and its first and second derivatives satisfy
\begin{align*}
 \Phi^\prime(\bua)[\mathbf{f}]=& \langle \sA\bua, \na^{-1}\mathbf{f}\rangle+\langle \na^{-1}\bua,\sA \mathbf{f}\rangle\\
 =& \langle A\bva, \mathbf{f}\rangle+ \langle \bva, \sA \mathbf{f}\rangle,\\
 \Phi^{\prime \prime}(\bua)[\mathbf{f},\mathbf{g}]=&\langle \sA \mathbf{g}, \na^{-1}\mathbf{f}\rangle+\langle \sA \mathbf{f}, \na^{-1} \mathbf{g}\rangle,
\end{align*}
for any $\mathbf{f}, \mathbf{g} \in D(A)$. In particular, the last identity and $\hsa$ and $\na^{-\frac12}$
being self-adjoint imply  that
\begin{equation*}
 \Phi^{\prime \prime}(\bua)[\mathbf{f},\mathbf{f}]=2\lvert \hsa \na^{-\frac12}\mathbf{f}\rvert^2.
\end{equation*}
Therefore, using the It\^o formula for $\Phi(\bua_n)$ and \eqref{GLAM-3}, we derive that there exists $c>0$
\begin{equation*}
\begin{split}
 d\Phi(\bua_n(t))\le \Phi^\prime(\bua_n(t))[-\nu\sA\bua_n(t)-\na B(\bua_n(t),\bva_n(t))]dt\\+ \Phi^\prime(\bua_n(t))[\na P_n Q{(\bua_n(t))}]dW(t)\\+
 c \Vert \hsa \na^{-\frac12}\na P_n Q{(\bua_n(t))}\Vert^2_{\mathcal{L}_2(\rK,\h)} dt.
 \end{split}
\end{equation*}
Referring to the equation for $\Phi^\prime(\bua)[\cdot]$ we see that
\begin{equation*}
\begin{split}
\Phi^\prime(\bua_n(t))[-\nu\sA\bua_n(t)-\na B(\bua_n(t),\bva_n(t))]=\langle \sA \bva_n(t), -\nu\sA\bua_n(t)-\na B(\bua_n(t),\bva_n(t))\\
+\langle \bva_n(t), \sA[-\nu\sA\bua_n(t)-\na B(\bua_n(t),\bva_n(t))]\rangle,
\end{split}
\end{equation*}
and
\begin{equation*}
 \begin{split}
 \Phi^\prime(\bua_n(t))[\na P_n Q{(\bua_n(t))}]dW(t)=\langle \sA\bva_n(t), \na P_n Q{(\bua_n(t))} dW(t)\rangle\\+\langle \bva_n(t), \sA [\na P_n Q{(\bua_n(t))}dW(t)]\rangle.
 \end{split}
\end{equation*}
Hence
\begin{equation}\label{GLAM-4}
 d\langle \sA\buna(t), \bvan(t)\rangle\le \langle \sA\buna(t), d\bvan(t)\rangle+\langle \bvan(t),
 d\sA\buna(t)\rangle+
 c \Vert \hsa \na^{-\frac12}\na P_n Q{(\bua_n(t))}\Vert^2_{\mathcal{L}_2(\rK,\h)} dt.
\end{equation}
First, we estimate the term $\langle \sA\buna(t), d\bvan(t)\rangle$. We derive from \eqref{GLAM-1}
that
\begin{equation}\label{GLAM-5}
 \begin{split}
  \langle \sA\buna(t), d\bvan(t)\rangle  =&
  [-\langle \sA\buna(t), \sA\bvan(t) \rangle-\langle B(\buna(t), \bvan(t)), \sA\buna(t)\rangle ]dt \\
  & \quad + \langle
  \sA\buna(t), P_n QdW(t)\rangle.
 \end{split}
\end{equation}
Recalling the definition of $\bvan$ we derive that
$$\langle \sA\buna(t), \sA\bvan(t) \rangle=\lvert \sA\buna(t)\rvert^2+\alpha^2 \lvert \sA^\frac32 \buna(t)\rvert^2. $$
Owing to the definition of $\bvan$, \eqref{BIL-B4} and \eqref{BIL-B5} we have
\begin{equation}\label{GLAM-6}
 \begin{split}
  \langle B(\buna(t), \bvan(t)), \sA\buna(t)\rangle=
\alpha^2 \langle B(\buna(t), \sA\buna(t), \sA\buna(t)\rangle
 + \langle B(\buna(t), \buna(t), \sA\buna(t) \rangle\\
 =0.
 \end{split}
\end{equation}
Therefore we derive from \eqref{GLAM-5}-\eqref{GLAM-6} that
\begin{equation}\label{GLAM-7}
 \langle \sA\buna(t), d\bvan(t)\rangle=-\lvert \sA\buna(t)\rvert^2 - \alpha^2 \lvert \sA^\frac32 \buna(t)\rvert^2 +
  \langle \sA\buna(t), P_n QdW(t)\rangle.
\end{equation}

Second, we treat the term $\langle \bvan(t), d\sA\buna(t)\rangle$, but before proceeding further we observe that
\begin{equation*}
 \sA\na=\frac1{\alpha^2}[\Id -\na]
\end{equation*}
from which it follows that
\begin{equation*}
\begin{split}
\langle  \sA\na B(\buna(t),\bvan(t)), \bvan(t)\rangle = & \frac1{\alpha^2}\langle B(\buna(t),\bvan(t), \bvan(t) \rangle
-\frac1{\alpha^2}\langle \na B(\buna(t),\bvan(t)), \bvan(t)\rangle\\
=& -\frac1{\alpha^2}\langle \na B(\buna(t),\bvan(t)), \bvan(t)\rangle,
\end{split}
\end{equation*}
where \eqref{BIL-B3} was used to derive the last line. Since $\bvan=\na^{-1} \buna$, we obtain that
\begin{equation*}
\begin{split}
 \langle \na B(\buna(t),\bvan(t)), \bvan(t)\rangle= & \langle B(\buna(t),\bvan(t)),\buna(t)\rangle\\
 = & \langle B(\buna(t),\buna(t)), \buna(t)\rangle+\alpha^2 \langle B(\buna(t), \sA\buna(t)), \buna(t)\rangle,
\end{split}
\end{equation*}
Owing to this last identity, \eqref{BIL-B3}-\eqref{BIL-B5} we infer that
\begin{equation}\label{GLAM-8}
\langle \sA \na B(\buna(t),\bvan(t)), \bvan(t)\rangle=0.
\end{equation}
Since
\begin{equation*}
 d\sA\buna(t)=[-\sA^2 \buna(t)-\sA\na B(\buna(t),\bvan(t))]dt+\sA\na P_n Q{(\bua_n(t))}dW(t),
\end{equation*}
 it follows by invoking \eqref{GLAM-8} and using the definition of $\bvan$  that
 \begin{equation*}
 \begin{split}
  \langle \bvan(t),d\sA\buna(t)\rangle=-\langle \buna(t),\sA^2 \buna(t)\rangle
  -\alpha^2 \langle \sA\buna(t), \sA^2 \buna(t)\rangle \\
  +\langle \bvan(t),\sA\na P_n Q{(\bua_n(t))} dW(t)\rangle.
  \end{split}
 \end{equation*}
From this latter identity we easily derive that
\begin{equation}\label{GLAM-9}
 \langle \bvan(t),d\sA\buna(t)\rangle=-\lvert \sA\buna(t)\rvert^2 -\alpha^2 \lvert \sA^\frac32 \buna(t)\rvert^2 + \langle
 \bvan(t), \sA\na P_n Q{(\bua_n(t))}  dW(t)\rangle.
\end{equation}
Plugging \eqref{GLAM-7} and \eqref{GLAM-9} in \eqref{GLAM-4} implies that
\begin{equation}\label{GLAM-11}
 \begin{split}
  \lvert\hsa \buna(t)\rvert^2+\alpha^2 \lvert \sA \buna(t)\rvert^2 - \lvert \hsa \bu_{0n}\rvert^2 -
  \alpha ^2 \lvert \sA \bu_{0n} \rvert^2 +2\int_0^t \Big(\lvert \sA\buna(s)\rvert^2+
  \alpha^2 \lvert \sA^\frac32 \buna(t)\rvert^2 \Big)ds  \\ \le c
  \int_0^t \Lve \hsa \na^\frac12 P_n Q{(\bua_n(t))}  \Rve^2_{\mathcal{L}_2(H)} ds+
  2 \int_0^t\langle \hsa \buna(s), \hsa P_n Q{(\bua_n(t))}  dW(s)\rangle,
 \end{split}
\end{equation}
where we have used the fact that
\begin{equation*}
\begin{split}
 \langle \bvan(s), \sA\na P_n Q{(\bua_n(t))}  dW(s)\rangle= \langle A \buna(s), P_n Q{(\bua_n(t))}  dW(s)\rangle\\
 =\langle \hsa \buna(s), \hsa P_n Q{(\bua_n(t))}  dW(s)\rangle.
 \end{split}
\end{equation*}
By the Burkholder-Davis-Gundy, Cauchy-Schwarz, Cauchy inequalities and Assumption \ref{HYP-Q}-\eqref{HYP-Q-ii} along with
Remark \ref{Rem-HYP}-\eqref{Rem-HYP-ii} we derive that
\begin{align}
 \me \sup_{r\in [0,t]}\biggl\vert \int_0^r \langle \sA\buna(s), P_n Q{(\bua_n(t))}  dW(s)\rangle\biggr\vert\nonumber \\
 \le c
 \me \biggl[\sup_{s\in [0,t]}\lvert \hsa \buna(s)\rvert \times \biggl(\int_0^t \Lve \hsa Q{(\buna(s))}\Rve_{\mathcal{L}_2(\rK,\h)} ds\biggr)^\frac12\biggr] \nonumber \\
 \le \frac14 \me \Big(\sup_{s\in [0,t]}[\lvert \hsa\buna(s)\rvert^2+\alpha^2 \lvert \sA \buna(s)\rvert^2]\Big)+
  c^2 \me \int_0^t \Lve \hsa Q{(\buna(s))}\Rve_{\mathcal{L}_2(\rK,\h)}^2 ds\nonumber \\
  \le \frac14 \me \Big(\sup_{s\in [0,t]}[\lvert \hsa\buna(s)\rvert^2+\alpha^2 \lvert \sA \buna(s)\rvert^2]\Big)+ c^2 \ell^2_3 \me\int_0^t (1+\lvert \hsa \buna(s)\rvert)^2 ds\nonumber \\
  \le \frac14 \me \Big(\sup_{s\in [0,t]}[\lvert \hsa\buna(s)\rvert^2+\alpha^2 \lvert \sA \buna(s)\rvert^2]\Big)+ c T+c \me\int_0^t [\lvert \hsa \buna(s)\rvert^2+\alpha^2 \lvert \sA \buna(s)\rvert^2] ds.
  \label{1ST-BDG}
\end{align}
Since $\na^\frac12 $ is self-adjoint and $\Lve \na \Rve_{\mathcal{L}(H)}\le 1$ we infer that
\begin{equation}\label{NASQ}
 \Lve \na^\frac12 \Rve_{\mathcal{L}(\h)}\le 1.
\end{equation}
Thus, it follows from Assumption \ref{HYP-Q}-\eqref{HYP-Q-ii} along with
Remark \ref{Rem-HYP}-\eqref{Rem-HYP-ii}
\begin{equation}\label{GLAM-12}
 \int_0^t \Lve \hsa \na^\frac12 P_n Q{(\bua_n(t))}  \Rve^2_{\mathcal{L}_2(\rK,\h)} ds\le c T+c \me\int_0^t [\lvert \hsa \buna(s)\rvert^2+\alpha^2 \lvert \sA \buna(s)\rvert^2] ds.
\end{equation}
Hence, the calculations between \eqref{GLAM-11} and \eqref{GLAM-12} yield
\begin{equation*}
 \begin{split}
  \me \big(\sup_{t\in[0,T]}[ \lvert\hsa \buna(t)\rvert^2+\alpha^2 \lvert \sA \buna(t)\rvert^2]\big)
  +4\me \int_0^t \Big(\lvert \sA\buna(s)\rvert^2+
  \alpha^2 \lvert \sA^\frac32 \buna(t)\rvert^2 \Big)ds \\
  \le   c T+c \me\int_0^t [\lvert \hsa \buna(s)\rvert^2+\alpha^2 \lvert \sA \buna(s)\rvert^2] ds
  + 2\lvert \hsa \bu_{0n}\rvert^2 +
  2 \alpha ^2 \lvert \sA \bu_{0n} \rvert^2.
 \end{split}
\end{equation*}
Since $\alpha \in (0,1)$ we derive from the
last estimate that for any
 $\alpha \in (0,1)$ and $n \in \mathbb{N}$
  \begin{equation*}
 \begin{split}
  \me \big(\sup_{t\in[0,T]}[ \lvert\hsa \buna(t)\rvert^2+\alpha^2 \lvert \sA \buna(t)\rvert^2]\big)
  +2\me \int_0^t \Big(\lvert \sA\buna(s)\rvert^2+
  \alpha^2 \lvert \sA^\frac32 \buna(t)\rvert^2 \Big)ds\\
  \le c T+c \me\int_0^t [\lvert \hsa \buna(s)\rvert^2+\alpha^2 \lvert \sA \buna(s)\rvert^2] ds+ 2\lvert \hsa \bu_{0}\rvert^2 +2 \lvert \sA \bu_{0} \rvert^2.
 \end{split}
\end{equation*}
Now it follows from the Gronwall lemma that there exists $C>0$ such that for any $\alpha \in (0,1)$ and $n \in \mathbb{N}$ we have
\begin{equation*}
 \begin{split}
  \me \big(\sup_{t\in[0,T]}[ \lvert\hsa \buna(t)\rvert^2+\alpha^2 \lvert \sA \buna(t)\rvert^2]\big)
  +2\me \int_0^t \Big(\lvert \sA\buna(s)\rvert^2+
  \alpha^2 \lvert \sA^\frac32 \buna(t)\rvert^2 \Big)ds\\
  \le [CT+ 2\lvert \hsa \bu_{0}\rvert^2 +2 \lvert \sA \bu_{0} \rvert^2](1+C e^{CT}).
 \end{split}
\end{equation*}
As $n\rightarrow \infty$, by lower semicontinuity we
deduce that
 \begin{equation}\label{GLAM-13}
 \begin{split}
  \me \big(\sup_{t\in[0,T]}[ \lvert\hsa \bua(t)\rvert^2+\alpha^2 \lvert \sA \bua(t)\rvert^2]\big)
  +2\me \int_0^t \Big(\lvert \sA\bua(s)\rvert^2+
  \alpha^2 \lvert \sA^\frac32 \bua(t)\rvert^2 \Big)ds \le K.
 \end{split}
\end{equation}
where $$K:=[CT+ 2\lvert \hsa \bu_{0}\rvert^2 +2 \lvert \sA \bu_{0} \rvert^2](1+C e^{CT}).$$

Now, let $y^\alpha_n (t)=\lvert \hsa \buna(t)\rvert^2 +\alpha^2 \lvert \sA\buna(t)\rvert^2$. Observing that
$$\langle \bvan(t), \sA\buna(t)\rangle=\lvert \hsa \buna(t)\rvert^2 +\alpha^2 \lvert \sA\buna(t)\rvert^2,$$
 we see that $[y^\alpha_n(t)]^2=[\Phi(\buna(t))]^2$. Therefore,
 from It\^o's formula and \eqref{NASQ} we deduce that
 \begin{equation*}
  \begin{split}
   d([\by]^2(t))\le [-4\by(t) (\lvert \sA\buna(t)\rvert^2+\alpha^2 \lvert \sA^\frac32 \buna(t)\rvert^2) + 2c \by(t) \Lve
   \hsa Q{(\buna(t))} \Rve^2_{\mathcal{L}_2(\rK,\h)}\\ + c \lvert \hsa \buna(t)\rvert^2 \Lve
   \hsa Q{(\buna(t))} \Rve^2_{\mathcal{L}_2(\rK,\h)} ]dt +4\by(t) \langle \hsa \buna(t), \hsa Q{(\buna(t))} dW(t)\rangle.
  \end{split}
 \end{equation*}
From this last inequality we infer that
\begin{equation}\label{FORMER}
 \begin{split}
  \me \sup_{s\in [0,t]} [\by]^2(s)- [\by(0)]^2+4\int_0^t \by(s)
  [\lvert \sA\buna(t)\rvert^2+\alpha^2 \lvert \sA^\frac32 \buna(t)\rvert^2]ds\\
  \le  4 \me \sup_{r\in [0,t]} \biggl \lvert \int_0^r \by(s) \langle \hsa \buna(t), \hsa Q{(\buna(s))} dW(s)\rangle\biggl\vert \\
   +c \me \int_0^t \by(s) \Lve \hsa Q{(\buna(s))} \Rve^2_{\mathcal{L}_2(\rK,\h)}ds.
 \end{split}
\end{equation}
Thanks to Remark \ref{Rem-HYP}-\eqref{Rem-HYP-ii} we easily derive that 
\begin{equation}\label{NOT-1}
 \by(s) \Lve \hsa Q{(\buna(s))} \Rve^2_{\mathcal{L}_2(\rK,\h)}\le C(1 + [\by(s)]^2)   
\end{equation}
Now, arguing as in the proof of \eqref{1ST-BDG} and using this last inequality we obtain the following estimates
\begin{align*}
 4 \me \sup_{r\in [0,t]} \biggl\lvert \int_0^r \by(s) \langle \hsa \buna(t), \hsa Q{(\buna(s))} dW(s)\rangle \biggl\lvert  \\
 \le \frac12 \me \sup_{s\in [0,t]} [\by(s) \lvert \hsa \buna(s)\rvert^2] 
 +  c \me \int_0^t \by(s)\Lve \hsa Q{(\buna(s))} \Rve^2_{\mathcal{L}_4(\h)} ds\\
 \le \frac12 \me \sup_{s\in [0,t]} [\by(s)\times \left(\lvert \hsa \buna(s)\rvert^2(s)+\alpha^2\vert \sA \buna(s)\rvert^2 \right)]+ c \me \int_0^t \by(s) (1+\lvert \hsa \buna(s)\rvert^2) ds \\
 \le \frac12 \me \sup_{s \in [0,t]}[\by(s)]^2+ cT + c \me \int_0^t [\by(s)]^2 ds.
\end{align*}
Taking the latter estimate and \eqref{NOT-1} into \eqref{FORMER}
\begin{equation*}
 \begin{split}
  \me \sup_{s\in [0,t]} [\by]^2(s)+8\me \int_0^t \by(s)
  [\lvert \sA\buna(t)\rvert^2+\alpha^2 \lvert \sA^\frac32 \buna(t)\rvert^2]ds\\
  \le 2 [\by(0)]^2 +c T +c \me \int_0^t [\by(s)]^2 ds.
 \end{split}
\end{equation*}
Applying Gronwall's and \eqref{GLAM-13} imply that there exists $C>0$ such that
\begin{equation*}
 \begin{split}
  \me \sup_{s\in [0,t]} [\by]^2(s)+8\me \int_0^t \by(s)
  [\lvert \sA\buna(t)\rvert^2+\alpha^2 \lvert \sA^\frac32 \buna(t)\rvert^2]ds\le K_0,
 \end{split}
\end{equation*}
where $$K_0:= [2(\vert \hsa \bu_0\rvert^2+\vert \sA \bu_0\vert^2)^2+CT]\cdot [1+C e^{KT}].  $$
Recalling the definition of $\by$ and by lower semicontinuity we infer from the last estimate that as $n\rightarrow \infty$
\begin{align}
  \me \sup_{s\in [0,T]}[\lvert \hsa \bua(t)\rvert^2 +\alpha^2 \lvert \sA\bua(t)\rvert^2]^2\le K_0,\label{GLAM-14}\\
   8
  \me \int_0^T \Big(\lvert \hsa \bua(t)\rvert^2 +\alpha^2
  \lvert \sA\bua(t)\rvert^2\Big)\Big(\lvert \sA\bua(t)\rvert^2+\alpha^2 \lvert \sA^\frac32 \bua(t)\rvert^2\Big) ds\le K_0.\label{GLAM-15}
\end{align}
By straightforward calculations we easily derive from \eqref{GLAM-14} and \eqref{GLAM-15} the set of estimates
 \eqref{AP-EST-LAM-1}-\eqref{AP-EST-LAM-5} stated in
Proposition \ref{PROP-EST-LAM}.
\end{proof}
\section{Rate of convergence of the sequence $\bua$ to $\bu$}\label{RATE}
In this section we consider a sequence $\{\alpha_n; n \in
\mathbb{N}\}\subset (0,1)$ such that $\alpha_n \rightarrow 0$ as
$n \toup \infty$. For each $n\in \mathbb{N}$ let $\bu^{\alpha_n}$
be the unique solution to \eqref{LAM} and for each $R\in
\mathbb{R}$ define a family of stopping times $\tau_R^n$ by
\begin{equation}\label{STOP}
 \tau_R^n:=\inf\{ t\in [0,T]; \int_0^t \Lve \bu^{\alpha_n}(s)\Rve^2 ds \ge R\}.
\end{equation}
 Let $\bu$ be the solution of
the stochastic Navier-Stokes equations; that is, $\bu$ solves \eqref{NSE}.
In the following theorem we will show that by localization
procedure the sequence $\bu^{\alpha}$ converges strongly in
$L^2(\Omega,L^\infty(0,T;\h))$ and $L^2(\Omega,L^2(0,T;\ve))$ and
the strong speed of convergence is of order $O(\alpha)$. 
\begin{thm}\label{ORDER}
Let Assumption \ref{HYP-Q} holds and let $\bu_0$ be a
$\mathcal{F}_0$-measurable random variable such that $\me(\vert
\hsa \bu_0\rvert^2+\vert \sA \bu_0\vert^2)^2<\infty.$ Then there
exists $C>0$, $\kappa_0 (T)>0$  such that for any $R>0$ and $n\in
\mathbb{N}$ we have
\begin{equation}\label{ORDER-1}
 \me \sup_{s\in [0,t\wedge \tau_R^n]} \rvert \bu-\bu^{\alpha_n} \lvert^2 +4\me \int_0^{t\wedge \tau_R^n}\lvert \hsa [ \bu-\bu^{\alpha_n} ] \rvert^2 ds \le
 \alpha_n^2 \beta(R) \kappa_0 e^{C(T)\beta(R)T},
\end{equation}
where $C(T):= C(1+T)$, $\beta(R):=1+CRe^{CR}$ and $$
\kappa_0(T):=CT+CT^2+CK_0+CTK_1+C(1+T).$$
\end{thm}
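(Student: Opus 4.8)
The plan is to pass from the unknown $\bva$ to $\bua$. Since $\bva=(\Id+\alpha^2\sA)\bua$, applying $\na=\ipsa$ to \eqref{LAM-1} yields, exactly as in \eqref{GLAM-3}, the equation
\begin{equation*}
 d\bua+[\nu\sA\bua+\na B(\bua,\bva)]\,dt=\na Q(\bua)\,dW .
\end{equation*}
Subtracting \eqref{NSE} (both models carrying the common viscosity $\nu$) and writing $\bw:=\bua-\bu$, I obtain
\begin{equation*}
 d\bw+\nu\sA\bw\,dt+\big[\na B(\bua,\bva)-B(\bu,\bu)\big]\,dt=\big[\na Q(\bua)-Q(\bu)\big]\,dW .
\end{equation*}
First I would apply It\^o's formula to $\lvert\bw\rvert^2$ on the random interval $[0,t\wedge\tau_R^n]$, with $\tau_R^n$ as in \eqref{STOP}; this produces the viscous integral $2\nu\int_0^{t\wedge\tau_R^n}\lvert\hsa\bw\rvert^2\,ds$ (a fraction of which survives on the left, giving the second term in \eqref{ORDER-1} after the Young absorptions below), the trilinear pairing $-2\langle\na B(\bua,\bva)-B(\bu,\bu),\bw\rangle$, the trace term $\int\Lve\na Q(\bua)-Q(\bu)\Rve_{\mathcal{L}_2(\rK,\h)}^2\,ds$, and a martingale term.

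The decisive algebraic step is the decomposition, obtained from $\bva=\bua+\alpha^2\sA\bua$,
\begin{equation*}
 \na B(\bua,\bva)-B(\bu,\bu)=B(\bw,\bua)+B(\bu,\bw)+\alpha^2 B(\bua,\sA\bua)+(\na-\Id)B(\bua,\bva).
\end{equation*}
Pairing with $\bw$, the term $\langle B(\bu,\bw),\bw\rangle$ vanishes by \eqref{BIL-B4}; the term $\langle B(\bw,\bua),\bw\rangle$ is estimated by \eqref{BIL-B2} and Young's inequality as $\tfrac\nu4\lvert\hsa\bw\rvert^2+C\lvert\bw\rvert^2\Lve\bua\Rve^2$, the second summand being the genuine Gronwall term whose time integral is $\le CR$ on $[0,\tau_R^n]$ by \eqref{STOP}. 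The two remaining pieces carry the prefactor $\alpha^2$ and are the heart of the matter. For $\alpha^2 B(\bua,\sA\bua)$ I would use \eqref{BIL-B3} to move the derivative, then \eqref{BIL-B2} and Young, to bound its pairing with $\bw$ by $\tfrac\nu4\lvert\hsa\bw\rvert^2+C\alpha^4\lvert\bua\rvert\Lve\bua\Rve\lvert\sA\bua\rvert\lvert\sA^\frac32\bua\rvert$, whose expectation–time integral is $O(\alpha^2)$ after Cauchy--Schwarz and estimate \eqref{AP-EST-LAM-5} of Proposition \ref{PROP-EST-LAM}. For $(\na-\Id)B(\bua,\bva)=-\alpha^2\sA\na B(\bua,\bva)$ I would transfer one half-power of $\sA$ onto $\bw$, use that $\hsa\na$ has operator norm $\le(2\alpha)^{-1}$ on $\h$, and Young, bounding its pairing by $\tfrac\nu4\lvert\hsa\bw\rvert^2+C\alpha^2\lvert B(\bua,\bva)\rvert^2$; here $B(\bua,\bva)\in\h$ and $\me\int_0^T\lvert B(\bua,\bva)\rvert^2\,ds\le CK_0$ by \eqref{GAG-l4} (or Agmon's inequality), Corollary \ref{COR-WAP}, and \eqref{AP-EST-LAM-2}, \eqref{AP-EST-LAM-5}, so this contribution is again $O(\alpha^2)$.

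The noise is handled by the splitting $\na Q(\bua)-Q(\bu)=\na[Q(\bua)-Q(\bu)]+(\na-\Id)Q(\bu)$. By Assumption \ref{HYP-Q}\eqref{HYP-Q-i} and $\Lve\na\Rve_{\mathcal{L}(\h)}\le1$ the first part contributes $\le\ell_0^2\lvert\bw\rvert^2$, a Gronwall term; for the second I would use that $(\na-\Id)$ gains a factor $\alpha$ when measured against $\hsa$, namely $\Lve(\na-\Id)Q(\bu)\Rve_{\mathcal{L}_2(\rK,\h)}^2\le\tfrac{\alpha^2}4\Lve\hsa Q(\bu)\Rve_{\mathcal{L}_2(\rK,\h)}^2$, together with Remark \ref{Rem-HYP}\eqref{Rem-HYP-ii} and the Navier--Stokes estimate \eqref{EST-NSE}, to see it is $O(\alpha^2)$. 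The martingale term is controlled by the Burkholder--Davis--Gundy inequality exactly as in the derivation of \eqref{1ST-BDG}, absorbing $\tfrac12\me\sup_{[0,t\wedge\tau_R^n]}\lvert\bw\rvert^2$ into the left-hand side.

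Collecting everything gives an inequality of the form
\begin{equation*}
 \me\sup_{[0,t\wedge\tau_R^n]}\lvert\bw\rvert^2+\nu\,\me\int_0^{t\wedge\tau_R^n}\lvert\hsa\bw\rvert^2\,ds\le\alpha_n^2\kappa_0(T)+C\,\me\int_0^{t\wedge\tau_R^n}\big(1+\Lve\bua\Rve^2\big)\sup_{[0,s]}\lvert\bw\rvert^2\,ds,
\end{equation*}
where the $\alpha_n^2$–prefactor $\kappa_0(T)$ gathers the $O(\alpha^2)$ remainders, which explains its dependence on $K_0$, on the Navier--Stokes quantity $K_1$, and on $T$. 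Because $\int_0^{t\wedge\tau_R^n}\Lve\bua\Rve^2\,ds\le R$ by \eqref{STOP}, the random Gronwall coefficient is bounded on the stopping interval, and the stochastic Gronwall scheme of \cite{BBM} — first controlling $\me\lvert\bw(t\wedge\tau_R^n)\rvert^2$ pointwise in $t$ and then feeding this back into the BDG bound for the supremum — produces the factor $\beta(R)=1+CRe^{CR}$ and the announced bound \eqref{ORDER-1}. I expect the main obstacle to be precisely the term-by-term matching of the $\alpha^2$–remainders against the weighted higher-order a priori bounds \eqref{AP-EST-LAM-2}--\eqref{AP-EST-LAM-5}: it is only because Proposition \ref{PROP-EST-LAM} controls $\alpha^2\lvert\sA\bua\rvert^2$, $\alpha^2\lvert\hsa\bua\rvert^2\lvert\sA^\frac32\bua\rvert^2$ and $\alpha^4\lvert\sA\bua\rvert^2\lvert\sA^\frac32\bua\rvert^2$ in mean that these otherwise $O(1)$ nonlinear contributions collapse to $O(\alpha^2)$, yielding the order-$O(\alpha)$ rate in mean square.
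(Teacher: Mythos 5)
Your proposal is correct and follows essentially the same route as the paper: It\^o's formula for $\lvert \delta\rvert^2$, isolation of the $O(\alpha^2)$ remainders in the nonlinearity and in the noise via the smoothing of $\na-\Id$ (the gain of one factor of $\alpha$ against $\hsa$, i.e.\ \cite[Lemma 4.1]{Titi}), control of those remainders by the weighted higher-order bounds of Proposition \ref{PROP-EST-LAM}, BDG for the martingale, and a stopped Gronwall argument producing $\beta(R)$. The only (harmless) differences are bookkeeping ones: you treat $B(\bua,\bua)-B(\bu,\bu)$ through $B(\bw,\bua)+B(\bu,\bw)$ and \eqref{BIL-B4} rather than the identity $-\langle B(\delta,\delta),\bua\rangle$, so your Gronwall weight is $\Lve\bua\Rve^2$ (matching \eqref{STOP}) instead of $\Lve\bua\Rve^4_{\el^4}$, and you place $(\na-\Id)$ on $B(\bua,\bva)$ rather than $B(\bua,\bua)$, which routes the estimates through \eqref{WAP-EST-LAM-2} and \eqref{AP-EST-LAM-5} instead of \eqref{AP-EST-LAM-3}--\eqref{AP-EST-LAM-2}; both variants close with the same constants up to relabelling.
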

\begin{proof}
Let us fix $n\in \mathbb{N}$ and let $\bu^{\alpha_n}$ be the
unique solution to \eqref{LAM}. Let $\bu$ be the unique solution
to \eqref{NSE}.
 Let us also fix $R>0$ and let $\tau^n_R$ be the stopping time defined above. For sake of simplicity we set
$\tau=t\wedge \tau^n_R$ for any $t\in [0,T]$ and
$\alpha:=\alpha_n$ for any $n \in \mathbb{N}$. Let
$\delta=\bu-\bua$. The stochastic process $\delta(t)$ with initial
condition $\delta(0)=0$  solves
\begin{equation*}
 \begin{split}
  d\delta(t)=[-\sA \delta(t)-B(\bu(t),\bu(t)+\na B(\bua(t),\bva(t))]dt= Q{(\bu(t))} -\na Q{(\bua(t))} )dW(t).\\
 \end{split}
\end{equation*}
Equivalently,
\begin{equation*}
 \begin{split}
  d\delta(t)+[\sA \delta(t)+B(\bu(t),\bu(t)-B(\bua(t),\bua(t))]dt-(Q{(\bu(t))} -\na Q{(\bua(t))} )dW(t)\\
  =[\na B(\bua(t),\bva(t))-B(\bua(t),\bua(t))]dt.\\
 \end{split}
\end{equation*}
From It\^o's formula we infer that
\begin{align}
  \sup_{s\in [0,\tau]}\rvert \delta(s)\lvert^2 +2\int_0^{\tau} \Lve \delta(s)\Rve^2 ds\le &
  2 \int_0^{\tau}\langle B(\bua(s),\bua(s)-B(\bu(s),\bu(s)), \delta(s)\rangle ds \nonumber \\
  & +2 \int_0^{\tau} \langle [\na B(\bua(s),\bva(s))-B(\bua(s),\bua(s))], \delta(s)\rangle ds\nonumber \\
  &+\int_0^{\tau} \Lve Q{(\bu(s))} -\na Q{(\bu(s))}  \Rve^2_{\mathcal{L}_2(\rK,\h)}ds\nonumber \\
  & +2 \int_0^{\tau}\langle \delta(s), (Q{(\bu(t))} -\na Q{(\bu(t))} )dW(s)\rangle \nonumber \\
  & \le \lvert J_1\rvert+\lvert J_2\rvert +\rvert J_3\rvert+  J_4  + J_5(t),\label{J0}
\end{align}
where
\begin{align*}
 J_1:=&2 \int_0^{\tau}\langle B(\bua(s),\bua(s)-B(\bu(s),\bu(s)), \delta(s)\rangle ds,\\
 J_2:=&2 \int_0^{\tau} \langle \na[B(\bua(s), \bva(s))-B(\bua(s), \bua(s))], \delta(s)\rangle ds,\\
 J_3:=&\int_0^{\tau} \langle (\na-\Id)B(\bua(s),\bua(s)),\delta(s)\rangle ds,\\
 J_4:=&\int_0^{\tau} \Lve Q{(\bu(s))} -\na Q{(\bu(s))}  \Rve^2_{\mathcal{L}_2(\rK,\h)}ds,\\
 J_5(t):=& 2 \int_0^{\tau}\langle \delta(s), (Q{(\bu(t))} -\na Q{(\bu(t))} )dW(s)\rangle.
\end{align*}
{Using the well-known fact $$\langle B(\bua(s),\bua(s)-B(\bu(s),\bu(s)), \delta(s) \rangle =-\langle B(\delta(s),\delta(s)), \bua(s)\rangle, $$
 the Cauchy-Schwarz inequality, the Gagliardo-Nirenberg inequality and the Young inequality we obtain the chain of inequalities
 \begin{align}
  \lvert J_1\rvert\le & 2 c \int_0^{\tau} \Lve \delta(s)\Rve_{\el^4 }\Lve \delta(s)\Rve \Lve \bua(s)\Rve_{\el^4} ds \nonumber\\
  \le & 2c  \int_0^{\tau} \lvert \delta(s) \rvert^\frac12 \Lve \delta(s)\Rve^\frac32 \Lve \bua(s)\Rve_{\el^4} ds,\nonumber\\
  \lvert J_1\rvert\le &  \frac12 \int_0^{\tau} \Lve \delta(s)\Rve^2 ds + c \int_0^{\tau} \lvert \delta(s)\rvert^2
  \Lve \bua(s)\Rve^4_{\el^4} ds.\label{J1}
 \end{align}}

By using the definition $\bva=\bua +\alpha^2 \sA\bua$ we see that
\begin{align*}
\langle \na[B(\bua(s), \bva(s))-B(\bua(s), \bua(s))], \delta(s)\rangle=\alpha^2 \langle \na B(\bua(s), \sA\bua(s)), \delta(s)\rangle\\
= \alpha^2 \langle  B(\bua(s), \sA\bua(s)), \na \delta(s)\rangle\\
= \alpha^2 \langle  B(\bua(s), \na \delta (s)), \sA\bua(s)\rangle.
\end{align*}
From the last line along with the Cauchy-Schwarz inequality, the embedding
$\bh^2\subset \el^\infty$, \eqref{NASQ},  and \eqref{Poincare-2} it follows that
\begin{align}
 \lvert \langle \na[B(\bua(s), \bva(s))-B(\bua(s), \bua(s))], \delta(s)\rangle \rvert\le
 c \alpha^2 \Lve \bua(s) \Rve_{\el^\infty (\MO)} \lvert \na \hsa \delta(s) \rvert \lvert \sA \bua(s)\rvert,\nonumber \\
 \le c \alpha^2 \lvert \sA \bua(s)\rvert^2 \lvert  \hsa \delta(s) \rvert.\nonumber
\end{align}
Applying the Cauchy inequality in the last estimate implies that
\begin{equation*}
 \lvert \langle \na[B(\bua(s), \bva(s))-B(\bua(s), \bua(s))], \delta(s)\rangle \rvert\le \frac12 \Lve \delta(s) \Rve^2
 +c \alpha^4 \lvert \sA \bua(s)\rvert^4.
\end{equation*}
Thus,
\begin{equation}\label{J2}
 \lvert J_2\rvert\le \frac12 \int_0^{\tau} \Lve \delta(s)\Rve^2 ds+c \alpha^2 \int_0^{\tau} \alpha^2 \lvert \sA \bua(s)\rvert^2 ds.
\end{equation}
Invoking \cite[Lemma 4.1]{Titi} we infer that
\begin{align*}
 \lvert \langle (\na-\Id)B(\bua(s),\bua(s)),\delta(s)\rangle \rvert\le c \frac\alpha 2 \lvert B(\bua(s),\bua(s))\rvert
 \Lve \delta(s)\Rve,
\end{align*}
from which along Cauchy-Schwarz, the embedding $\bh^1(\MO)\subset \el^4(\MO)$, \eqref{Poincare-2} and
the Cauchy inequality  we derive that
\begin{align*}
 \lvert \langle (\na-\Id)B(\bua(s),\bua(s)),\delta(s)\rangle \rvert\le c \frac\alpha 2 \Lve \bua(s)\Rve_{\el^4(\MO)}
 \Lve \nabla \bua(s)\Rve_{\el^4(\MO)} \Lve \delta(s)\Rve\\
 \le c \frac\alpha 2 \Lve \bua(s)\Rve
 \vert \sA \bua(s)\vert \Lve \delta(s)\Rve\\
 \le \frac12 \Lve \delta(s)\Rve+c \frac{\alpha^2}4 \Lve \bua(s)\Rve^2 \lvert \sA\bua(s)\rvert^2.
\end{align*}
Hence
\begin{equation}\label{J3}
 \lvert J_3\rvert\le \frac12 \int_0^{\tau} \Lve \delta(s)\Rve^2 ds+ c \frac{\alpha^2}4 \int_0^{\tau} \Lve \bua(s)\Rve^2
 \lvert \sA \bua(s)\rvert^2 ds.
\end{equation}
Since $$ Q(\bu)-\na Q(\bua)= [Q(\bu)-\na Q(\bu)]+[\na Q(\bu)-\na Q(\bua)],$$ we infer that
\begin{align*}
 J_4\le &c \int_0^{\tau} \Lve Q{(\bu(s))} -\na Q{(\bu(s))}\Rve^2_{\mathcal{L}_2(\rK,\h)}ds+c \int_0^{\tau} \Lve [\na Q(\bu(s))-\na Q(\bua(s))] \Rve^2_{\mathcal{L}_2(\rK,\h)}\\ & \quad :=J_{4,1}+J_{4,2}.
\end{align*}
Since $Q{(\bu(t))} -\na Q{(\bu(t))} =\alpha^2 \sA \na Q{(\bu(t))}  $ we easily cheack that
\begin{align}
 J_{4,1} \le & c \alpha^2 \int_0^{\tau} \Lve \alpha \sA \na Q{(\bu(s))} \Lve^2_{\mathcal{L}_2(\rK,\h)} ds\nonumber\\
 \le & c \alpha^2 \int_0^{\tau} \Lve \alpha \sA^\frac12 \na \hsa Q{(\bu(s))}   \Lve^2_{\mathcal{L}_2(\rK,\h)}ds\nonumber \\
 \le & c \alpha^2 \Lve  \alpha \sA^\frac12 \na \Rve^2_{\mathcal{L}(\h)} \int_0^{\tau} \Lve \hsa Q{(\bu(s))}   \Lve^2_{\mathcal{L}_2(\rK,\h)}ds.\nonumber
\end{align}
Owing to Assumption \ref{HYP-Q}-\eqref{HYP-Q-ii} altogether with Remark \ref{Rem-HYP}-\eqref{Rem-HYP-ii} we obtain that
\begin{align*}
 J_{4,1}\le & c \alpha^2 \Lve  \alpha \sA^\frac12 \na \Rve^2_{\mathcal{L}(\h)} \int_0^{\tau} (1+\lvert \hsa \bu(s)\rvert)^2 ds\\
 \le & c \alpha^2 \Lve  \alpha \sA^\frac12 \na \Rve^2_{\mathcal{L}(\h)}[cT+ c \int_0^{\tau} \lvert \hsa \bu(s)\rvert^2 ds]
\end{align*}
It follows from the last estimate and \cite[Proof of Lemma 4.1]{Titi} that
\begin{equation}\label{J41}
 J_{4,1} \le c \frac{\alpha^2}{4}(T+ \int_0^{\tau} \lvert \hsa \bu(s)\rvert^2 ds).
\end{equation}
Now to estimate $J_{4,2}$ we use the fact that $\Lve \na \Rve_{\mathcal{L}(\h)}\le 1$ and Assumption \ref{HYP-Q}-\eqref{HYP-Q-i} and derive that
\begin{align}
 J_{4,2}\le & c \int_0^{\tau} \Lve \na \Rve^2_{\mathcal{L}(\h)}\Lve Q(\bu(s))-Q(\bua(s))\Rve^2_{\mathcal{L}_2(\rK,\h)}ds\nonumber \\
 \le & c \int_0^{\tau} \lvert \delta(s)\rvert^2 ds.\label{J42}
\end{align}
Thus, the two estimates \eqref{J41} and \eqref{J42} yield that
\begin{equation}\label{J4}
 J_4 \le \frac{\alpha^2}{4}\left(c T + c\int_0^{\tau} \lvert \hsa \bu(s)\rvert^2 ds\right)+c \int_0^{\tau} \lvert \delta(s)\rvert^2 ds.
\end{equation}
It follows from \eqref{J1}, \eqref{J2}, \eqref{J3} and \eqref{J4} that
\begin{equation}\label{J0-b}
 \sup_{s\in [0,\tau]}\lvert \delta(s)\rvert^2 + 4 \int_0^{\tau} \lvert \hsa \delta(s)\rvert^2 ds\le \alpha^2 \int_0^{\tau} \mathfrak{X}(s) \lvert \delta(s)\rvert^2 ds +\alpha ^2 \mathfrak{Y}(t) + \mathfrak{Z}(t),
\end{equation}
where
\begin{align*}
 \mathfrak{Y}(t):= &c \int_0^{\tau} \left( \alpha^2 \lvert \sA \bua(s) \rvert^4+ \lvert \hsa \bua(s) \rvert^2 \lvert \sA\bua(s)\rvert^2 + \lvert \hsa \bu(s)\rvert^2 \right)ds +cT,\\
 \mathfrak{Z}(t):= &\sup_{s\in [0,\tau]}\lvert J_5(s)\rvert+ c\int_0^{\tau} \lvert \delta(s)\rvert^2 ds,\\
 \mathfrak{X}(t):=& \Lve \bua(t) \Rve^4_{\el^4}.
 \end{align*}
 Note that using the definition of $\tau^\alpha_R$ it is not diffuclt to see that $$ \int_0^{\tau} \mathfrak{X}(s) ds \le R.$$
Note also that it follows from the Gronwall lemma  that 
\begin{equation*}
  \sup_{s\in [0,\tau]}\lvert \delta(s)\rvert^2 + 4 \int_0^{\tau} \lvert \hsa \delta(s)\rvert^2 ds\le (\alpha^2 \mathfrak{Y}(t)+\mathfrak{Z}(t))\cdot(1+CR e^ {CR} )
 \end{equation*}
Hence taking the mathematical expectation in  \eqref{J0-b} we obtain that
\begin{equation}\label{J0-c}
 \me \sup_{s\in [0,\tau]}\lvert \delta(s)\rvert^2 + 4 \me \int_0^{\tau} \lvert \hsa \delta(s)\rvert^2 ds\le (\alpha^2 \me \mathfrak{Y}(t)+\me \mathfrak{Z}(t))\cdot(1+CR e^ {CR} )
\end{equation}
Owing to the definition of $\mathfrak{Y}$ and Proposition
\ref{PROP-EST-LAM} we derive that
\begin{equation}\label{J0-d}
 \me \mathfrak{Y}(t)\le C T + C K_0.
\end{equation}
Now we deal with the estimation of $\me \mathfrak{Z}(t)$.
By the Burkholder-Davis-Gundy inequality we deduce that
\begin{align*}
 \me \sup_{s\in[0, t]} \lvert J_5(t)\rvert\le c T^\frac12 \me \sup_{s\in [0,\tau]}\lvert \delta(s) \rvert \sqrt{J_4}\\
 \le \frac12 \me \sup_{s\in [0,\tau]}\lvert \delta(s) \rvert^2 +cT J_4.
\end{align*}
Thus, by the last estimate, the inequality \eqref{J4} and the defintion of $\mathfrak{Z}(t)$ we derive that
\begin{equation}\label{J5}
 \me \mathfrak{Z}(t) \le \frac12 \me \sup_{s\in [0,\tau]}\lvert \delta(s) \rvert^2+\frac{\alpha^2}{4}\left(c T^2 + cT \int_0^{\tau} \lvert \hsa \bu(s)\rvert^2 ds\right)+c(1+T) \int_0^{\tau} \lvert \delta(s)\rvert^2 ds.
\end{equation}
Therefore, we derive from \eqref{NSE}, \eqref{J0-c}, \eqref{J0-d} and \eqref{J5} that
\begin{equation}\label{SPD1}
\begin{split}
 \me \sup_{s\in [0,\tau]} \rvert \delta(s)\lvert^2 +8 \me \int_0^{\tau}  \Lve \delta(s)\Rve^2 ds \le \alpha^2 \kappa_0 \beta(R) +\beta(R)C(T) \me \int_0^{\tau}\lvert \delta(s)\rvert^2 ds,
\end{split}
\end{equation}
where $C(T):= C(1+T)$, $\beta(R):=1+CRe^{CR}$ and $$
\kappa_0:=CT+CT^2+CK_0+CTK_1+C(1+T)$$ Applying Gronwall inequality
into \eqref{SPD1} implies that
\begin{equation}
 \me \sup_{s\in [0,\tau]} \rvert \delta(s)\lvert^2 +4\me \int_0^{\tau}  \Lve \delta(s)\Rve^2 ds \le \alpha_n^2 \beta(R) \kappa_0 e^{C(T)\beta(R)T},
\end{equation}
where the positive constant $\beta(R) \kappa_0 e^{C(T)\beta(R)T}$ does not depend on $n$ and the sequence $\alpha_n$.
\end{proof}
For every $R>0$,  $t\in [0,T]$ and any integer $n\geq 1$, let
\begin{equation}\label{Omega_M}
\Omega_{R}^n(t):=\left\{\omega\in\Omega\; : \; \int_{0}^{t}\Vert
\bu^{\alpha_n}(s,\omega)\Vert_{\el^4}^{4} ds\leq R\right\}.
\end{equation}
This definition shows that  $\Omega_R^n(t)\subset \Omega_R^n(s)$
for $s\leq t$ and that $\Omega_R^n(t)\in {\mathcal F}_t$ for any
$t\in [0,T]$.  Let
$\tau^n_R$ be the stopping time defined in \eqref{STOP}. It is not
difficult to show that $\tau_R^n=T$ on the set $\Omega_R^n(T)$.

Owing to the estimate we derive in the course of the proof of Theorem  \ref{ORDER} we derive the following result which tells us about the rate of convergence in probability of 
$\bua$ to $\bu$.
\begin{thm}\label{ORDER-PR}
Let the assumptions of Theorem \ref{ORDER} be satisfied. For any
integer $n\geq 1$ let $\eps_n(T)$  denote the error term defined
by
\[ \eps_n(T)=\sup_{s\in [0,T]} \vert \bu^{\alpha_n}(s) - \bu(s)\vert + \Big( \int_0^T \vert \hsa[\bu^{\alpha_n}(s) - \bu(s)] \vert ^2ds \Big)^{1/2} .\] Then $\eps_n(T)$ converges to 0 in probability
and the convergence is of order $O(\alpha_n)$. To be precise, for
any sequence $\big(\Gamma_n\big)_{n=1}^\infty $ converging to
$\infty$,
\begin{equation} \label{speed-proba-0}
\lim_{n \to \infty} \mathbb{P}\Big( \eps_n(T)\geq
\frac{\Gamma_n}{\alpha_n}\Big) = 0.
\end{equation}
Therefore, the sequence $\bu^\alpha$ converges to $u$ in
probability in $\h$ and the rate of convergence is of order
$O(\alpha)$.
\end{thm}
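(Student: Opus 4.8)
The plan is to deduce the convergence in probability from the single mean-square bound \eqref{ORDER-1} of Theorem \ref{ORDER} by a localisation argument, controlling the probability that the localisation is active through the uniform-in-$\alpha$ a priori estimates of Section \ref{AP-EST-SEC}. Fix $R>0$ and let $(\Gamma_n)$ be any sequence with $\Gamma_n\toup\infty$. I would first split the event according to whether the $\el^4$-energy of $\bu^{\alpha_n}$ has exceeded the level $R$ before time $T$:
\[
\mathbb{P}\big(\eps_n(T)\ge\Gamma_n\alpha_n\big)\le \mathbb{P}\big(\{\eps_n(T)\ge\Gamma_n\alpha_n\}\cap\Omega_R^n(T)\big)+\mathbb{P}\big(\Omega\setminus\Omega_R^n(T)\big).
\]

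On the event $\Omega_R^n(T)$ we have $\tau_R^n=T$, as recorded just before the statement, so there $\eps_n(T)=\eps_n(T\wedge\tau_R^n)$, where $\eps_n(T\wedge\tau_R^n)$ denotes the obvious stopped error term. Using the elementary inequality $\eps_n(T\wedge\tau_R^n)^2\le 2\sup_{s\in[0,T\wedge\tau_R^n]}\lvert\bu^{\alpha_n}(s)-\bu(s)\rvert^2+2\int_0^{T\wedge\tau_R^n}\lvert\hsa[\bu^{\alpha_n}(s)-\bu(s)]\rvert^2\,ds$ together with Chebyshev's inequality and \eqref{ORDER-1} (with $t=T$), I would bound the first term by
\[
\mathbb{P}\big(\eps_n(T\wedge\tau_R^n)\ge\Gamma_n\alpha_n\big)\le\frac{\me\,\eps_n(T\wedge\tau_R^n)^2}{\Gamma_n^2\alpha_n^2}\le\frac{2\,\beta(R)\,\kappa_0(T)\,e^{C(T)\beta(R)T}}{\Gamma_n^2}.
\]
The decisive point is that the factor $\alpha_n^2$ produced by Theorem \ref{ORDER} cancels the $\alpha_n^2$ in the denominator; for $R$ fixed the right-hand side is a constant over $\Gamma_n^2$, hence tends to $0$ as $n\to\infty$ since $\Gamma_n\toup\infty$.

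For the second term I would invoke the uniform a priori bound. By Chebyshev's inequality and then Corollary \ref{COR-WAP} (equivalently, \eqref{WAP-EST-LAM-2} combined with the Gagliardo--Nirenberg inequality \eqref{GAG-l4}, which gives $\Lve\bu^{\alpha_n}\Rve_{\el^4}^4\le c\,\lvert\bu^{\alpha_n}\rvert^2\lvert\hsa\bu^{\alpha_n}\rvert^2$),
\[
\mathbb{P}\big(\Omega\setminus\Omega_R^n(T)\big)=\mathbb{P}\Big(\int_0^T\Lve\bu^{\alpha_n}(s)\Rve_{\el^4}^4\,ds>R\Big)\le\frac1R\,\me\int_0^T\Lve\bu^{\alpha_n}(s)\Rve_{\el^4}^4\,ds\le\frac{C}{R},
\]
with $C$ independent of $n$. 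Combining the two bounds and letting $n\to\infty$ with $R$ held fixed kills the first term, so $\limsup_{n\to\infty}\mathbb{P}(\eps_n(T)\ge\Gamma_n\alpha_n)\le C/R$; as $R>0$ is arbitrary, sending $R\toup\infty$ yields \eqref{speed-proba-0}. Finally, since $\sup_{s\in[0,T]}\lvert\bu^{\alpha_n}(s)-\bu(s)\rvert\le\eps_n(T)$, the same statement shows that $\bu^{\alpha_n}\to\bu$ in probability in $\h$ with rate $O(\alpha)$.

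The main (and essentially only) obstacle is the order in which the two limits are taken. One cannot let $R\to\infty$ jointly with $n\to\infty$, because the constant $\beta(R)e^{C(T)\beta(R)T}$ coming from the Gronwall step in Theorem \ref{ORDER} grows double-exponentially in $R$. The argument is sound only because, for each fixed $R$, the factor $\Gamma_n^{-2}\to 0$ overwhelms this constant, while the tail $\mathbb{P}(\Omega\setminus\Omega_R^n(T))$ is controlled uniformly in $n$ by $C/R$ through the uniform estimates of Section \ref{AP-EST-SEC}; taking $\limsup_{n}$ first and $R\toup\infty$ afterwards is what legitimises the interchange.
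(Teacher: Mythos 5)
Your proposal is correct and follows the paper's proof in all essentials: the same splitting over $\Omega_R^n(T)$ and its complement, the same use of Chebyshev/Markov together with the localized mean-square bound \eqref{ORDER-1} on the good event (where $\tau_R^n=T$), and the same appeal to Corollary \ref{COR-WAP} to get the uniform-in-$n$ tail bound $\mathbb{P}(\Omega\setminus\Omega_R^n(T))\le C/R$. The only genuine divergence is in how the two limits are combined at the end. You keep $R$ fixed, take $\limsup_{n\to\infty}$ to kill the $\Gamma_n^{-2}\beta(R)\kappa_0 e^{C(T)\beta(R)T}$ term, obtain $\limsup_n\mathbb{P}(\eps_n(T)\ge\Gamma_n\alpha_n)\le C/R$, and then let $R\toup\infty$; the paper instead couples the two limits by choosing an explicit, very slowly growing $R(n)=\frac1C\log\bigl(\frac1{C(T)}\log\bigl(\log\bigl(\log\Gamma_n\bigr)\bigr)\bigr)$ so that $e^{C(T)e^{CR(n)}}\le\log(\log\Gamma_n)=o(\Gamma_n^2)$ and both terms vanish along a single sequence. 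Your iterated-limit argument is the more elementary of the two and avoids any delicate calibration against the double-exponential constant; the paper's choice of $R(n)$ buys nothing extra for the stated qualitative conclusion \eqref{speed-proba-0}, though it would be the version to use if one wanted an explicit single-sequence rate. Your remark correctly identifies that the only danger is letting $R\to\infty$ too fast relative to $n$, which your ordering of the limits avoids. (Both you and the paper prove the bound for the event $\{\eps_n(T)\ge\Gamma_n\alpha_n\}$ rather than the $\{\eps_n(T)\ge\Gamma_n/\alpha_n\}$ appearing in \eqref{speed-proba-0}, which is the formulation consistent with a rate $O(\alpha_n)$; the displayed statement contains a typo, not your proof.)
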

\begin{proof}
First from the definition of $\Omega_R^n(T)$ and Corollary \ref{COR-WAP} we can show that
 \begin{equation}\label{OMEGA}
\lim_{R\rightarrow \infty} \mathbb{P}\left(\Omega\backslash
\Omega_R^n(T)\right)=0.
\end{equation}
 In fact, it follows from the Markov inequality and \eqref{WAP-EST-LAM-6} that 
\begin{align*}
\sup_{n\geq 1} \mathbb{P}(\Omega\backslash \Omega_R^n(T))\le \frac1R \sup_{n\ge 1}\me \int_0^T \Vert \bu^{\alpha_n}(s) \Vert^4_{\el^4}ds \\
\le \frac1R (\me \lvert \bu_0+\sA\bu_0\rvert^4+CT )(1+Ce^{CT}) \to 0,
\end{align*}
 as $R\to \infty$.
Now let $\{\Gamma_n; n\in \mathbb{N}\}$ be a sequence of positive
numbers such that $\Gamma_n\rightarrow \infty$ as $n\rightarrow
\infty$. By straightforward calculation we deuce that
\begin{align*}
\mathbb{P}\Big( \eps_n(T)\geq \Gamma_n \alpha_n \Big)\leq &\mathbb{P}\left(\Omega\backslash \Omega_R^n(T)\right) +
\mathbb{P}\left({\eps_n(T)\geq \Gamma_n \alpha_n },{\Omega_R^n(T)}\right)\\
\leq & \mathbb{P}\left(\Omega\backslash \Omega_R^n(T)\right) +
\mathbb{E}\left(1_{\Omega_R^n(T)} \eps_n(T)\geq \Gamma_n \alpha_n
\right).
\end{align*}
Using Markov inequality and \eqref{ORDER-1} in the last inequality
implies that
\begin{align*}
\mathbb{P}\Big( \eps_n(T)\geq \Gamma_n \alpha_n \Big) \leq &
\mathbb{P}\left(\Omega\backslash \Omega_R^n(T)\right) +
\frac1{\Gamma_n^2\alpha_n^2} \mathbb{E}\left(1_{\Omega_R^n(T)}
\eps^2_n(T)\right)\\
\le & \mathbb{P}\left(\Omega\backslash \Omega_R^n(T)\right) +
\frac1{\Gamma_n^2} \beta(R) \kappa_0 e^{C(T)\beta(R)T}
\end{align*}
where $C(T):= C(1+T)$, $\beta(R):=1+CRe^{CR}$ and $$
\kappa_0(T):=CT+CT^2+CK_0+CTK_1+C(1+T)$$ for
some constant $C>0$ independent of $n$. Note that $\beta(R)\le C
e^{2 CR}$ for any $R>0$. Hence there exist positive constants
$C>0$ and $C(T)$ such that for all $n\in \mathbb{N}$ we have
\begin{align}\label{CONV-PROB}
\mathbb{P}\Big( \eps_n(T)\geq \Gamma_n \alpha_n \Big) \le &
\mathbb{P}\left(\Omega\backslash \Omega_R^n(T)\right) +
\frac1{\Gamma_n^2} \kappa_0 e^{C(T)e^{CR} }.
\end{align}
Let $\{\Gamma_n; n\in \mathbb{N}\}$ be a sequence such that
$\Gamma_n\rightarrow \infty$ as $n\rightarrow \infty$ and
$$R(n)=\frac1C\log\left(\frac1{C(T)}\log\left(\log\left(\log(\Gamma_n)\right)\right)\right).$$
As $n\rightarrow \infty$ we see that $R(n)\rightarrow \infty$.
Thus, since there exists
 $c>0$ such that
 $\log\left(\log(\Gamma_n)\right)\le c \Gamma_n$ for $n$ large
 enough, it follows from \eqref{OMEGA} and \eqref{CONV-PROB}
 that
\begin{align*}
\mathbb{P}\Big( \eps_n(T)\geq \Gamma_n \alpha_n \Big) \le &
\mathbb{P}\left(\Omega\backslash \Omega_{R(n)}^n(T)\right) +
\frac1{\Gamma_n^2} \kappa_0 \log\left(\log(\Gamma_n)\right)
\rightarrow 0.
\end{align*}
as $n\to \infty$; this  concludes the proof.
\end{proof}
\section*{Acknowledgments}
P.~A.~Razafimandimby's research is funded by the FWF-Austrian Science Fund through the project M1487. Hakima Bessaih.~was supported in part by the Simons Foundation grant \#283308 and NSF grant \#1416689. 
The research on this paper was initiated during the visit of Razafimandimby at
 the University of Wyoming in November 2013 and was finished while P. R and H. B were visiting KAUST. They are both very grateful to both institutions for the warm and kind hospitality and great scientific atmosphere.


\begin{thebibliography}{99}
\bibitem{BBF}D.~Barbato, H.~Bessaih and B.~Ferrario. 
\newblock{On a stochastic Leray-$\alpha$ model of Euler equations.}
\newblock{\em Stochastic Process. Appl.} \textbf{124}(1): 199-219, 2014. 

\bibitem{Bensoussan} A.~Bensoussan. \newblock Stochastic {N}avier-{S}tokes {E%
}quations. \newblock {\em Acta Applicandae Mathematicae}, 38:267--304, 1995.

\bibitem{Bensoussan-Temam} A.~Bensoussan and R.~Temam. \newblock Equations {S}%
tochastiques du {T}ype {N}avier-{S}tokes. 
\newblock {\em Journal of
Functional Analysis}, \textbf{13}:195--222, 1973.

\bibitem{BBM}Z.~Brze\'zniak, H.~Bessaih and M.~Millet. Splitting up method for the 2D stochastic Navier-Stokes equations. To appear in {\em SPDEs: Analysis and Computations.}

\bibitem{BCF}Z.~Brze\'zniak, M.~Capi\'nski and F.~Flandoli. Stochastic Navier--stokes equations with multiplicative
noise. {\em Stochastic Analysis and Applications.} \textbf{10}(5): 523--532, 1992.


\bibitem{ZB+SP}Z.~Brze\'zniak and S.~Peszat. Strong local and global solutions for stochastic Navier-Stokes equations. 
{\em Infinite dimensional stochastic analysis (Amsterdam, 1999)}, 85--98, Verh. Afd. Natuurkd. 1. Reeks. K. Ned. Akad. Wet., 52, R. Neth. Acad. Arts Sci., Amsterdam, 2000.

\bibitem{Caraballo1} T.~Caraballo, A.~M.~M\'arquez-Dur\'an and J.~Real. 
\newblock{Asymptotic behaviour of the three-dimensional $\alpha$-Navier-Stokes model with delays.}  \newblock{\em J. Math. Anal. Appl.}
\textbf{340}(1): 410--423, 2008.

\bibitem{Caraballo2} T.~Caraballo, J.~Real and T.~Taniguchi. 
\newblock{On the existence and uniqueness of solutions to stochastic three-dimensional Lagrangian averaged Navier-Stokes equations.}
\newblock{\em Proc. R. Soc. Lond. Ser. A Math. Phys. Eng. Sci.} \textbf{462}(2066): 459--479, 2006. 

\bibitem{CLT}Y.~Cao, E.~M.~Lunasin and E.~S.~Titi. 
Global well-posedness of three-dimensional viscous and inviscid simplified Bardina turbulence models. {\em Commun. Math. Sci.} \textbf{4}:823--848, 2006.  

\bibitem{Titi} Y.~Cao, and E.~.S.~Titi. \newblock{
On the rate of convergence of the two-dimensional $\alpha$-models of turbulence to the Navier-Stokes equations.}
\newblock{\em Numer. Funct. Anal. Optim.} \textbf{30}(11-12):1231--1271, 2009.

\bibitem{Chen-1}
S.~Chen, C.~Foias, D.~D.~Holm, E.~Olson, E.~S.~Titi and S. Wynne. 
Camassa--Holm equations as a closure model for turbulent channel and pipe flow. {\em Phys. Rev. Lett.} \textbf{81}:5338--5341, 1998.

\bibitem{Chen-2}S.~Chen, C.~Foias, D.~D.~Holm, E.~Olson, E.~S.~Titi and S. Wynne. 
A connection between the Camassa--Holm equations and turbulent flows in channels and pipes. {\em Phys. Fluid.} \textbf{11}:2343--2353, 1999.

\bibitem{Chen-3}S.~Chen, C.~Foias, D.~D.~Holm, E.~Olson, E.~S.~Titi and S. Wynne. The Camassa--Holm equations and turbulence. {\em Phys. D} \textbf{133}:49--65, 1999.


\bibitem{CHMZ}S.~Chen, D.~D.~Holm, L.~G.~Margolin and R.~Zhang. 
Direct numerical simulations of the Navier--Stokes alpha model. {\em Phys. D} \textbf{133}:66--83, 1999.

\bibitem{waymire} L.~Chen, R. B. ~Guenther, E. A.~Thomann, E. C. ~Waymire,  A rate of convergence for the LANS$\alpha$ regularization of Navier-Stokes. {\em J. Math. Anal. Appl.} \textbf{348}:637--649, 2008.

\bibitem{Chepyzhov} V.V.~Chepyzhov, E.S.~Titi, and M.I.~Vishik.
\newblock{On the convergence of solutions of the Leray-$\alpha$ model to the trajectory
attractor of the 3D Navier-Stokes system.}
\newblock{\em Discrete Contin.
Dyn. Syst.} \textbf{17}(3): 481--500, 2007.

\bibitem{CHOT}A.~Cheskidov, D.~D.~Holm, E.~Olson and E.~S.~Titi. On a Leray-$\alpha$ model of turbulence. {\em Roy. Soc. A} \textbf{461}:629--649, 2005.

\bibitem{IC+SK}I.~Chueshov and S.~Kuksin. \newblock{Stochastic 3D Navier--Stokes equations in a thin domain and its $\alpha$-approximation.}
\newblock{\em Phys. D} \textbf{237}(10-12): 1352--1367, 2008.

\bibitem{IC+AM-1} I.~Chueshov and A.~Millet. \newblock{Stochastic two-dimensional hydrodynamical systems: Wong--Zakai approximation and support theorem.}
\newblock{\em Stoch. Anal. Appl.} \textbf{29}(4): 570--611, 2011.

\bibitem{IC+AM-2} I.~Chueshov and A.~Millet. \newblock{ Stochastic 2D hydrodynamical type systems: well posedness and large deviations.}
 \newblock{\em Appl. Math. Optim.} \textbf{61}(3): 379--420, 2010.

\bibitem{PC+CF-88} P.~Constantin and C.~Foias. \newblock{\em Navier-Stokes Equations.}
The University of Chicago Press, Chicago, IL, 1988.


\bibitem{DEUGOUE3}G.~Deugoue and M.~Sango. Weak solutions to stochastic 3D Navier-Stokes-$\alpha$ model of turbulence: $\alpha$-asymptotic behavior. 
\newblock{\em J. Math. Anal. Appl.} \textbf{384}(1): 49--62, 2011.

\bibitem{DEUGOUE2} G.~Deugoue and M.~Sango. On the Strong Solution for the
3D Stochastic Leray-$\alpha$ Model.  \textit{Boundary Value Problems}, vol.
2010, Article ID 723018, 31 pages, 2010. doi:10.1155/2010/723018.

\bibitem{DEUGOUE4}G.~Deugoue and M.~Sango.  
\newblock{On the stochastic 3D Navier-Stokes-$\alpha$ model of fluids turbulence.} 
\newblock{\em Abstr. Appl. Anal.} 2009, Art. ID 723236, 27 pp.

\bibitem{DRS}G.~Deugou\'e, P.~A.~Razafimandimby and M.~Sango.
 \newblock{On the 3-D stochastic magnetohydrodynamic-$\alpha$ model.}
 \newblock{\em Stochastic Process. Appl.} \textbf{122}(5): 2211--2248, 2013.
 

\bibitem{Flandoli0}
F.~Flandoli.
{\em Random perturbation of PDEs and fluid dynamic models.}
\'Ecole d'\'Et\'e de Probabilités de Saint-Flour XL - 2010,
Volume \textbf{2015} of Lecture notes in mathematics ,
Berlin, Springer , 2011.

\bibitem{Flandoli_Gatarek_95}  Flandoli F., and  Gatarek D.,
{Martingale and stationary solutions for stochastic Navier-Stokes
equations.} {\em Probability Theory and Related
Fields},\textbf{102}: 367--391, 1995.




\bibitem{Foias1}C.~Foias, D.D.~Holm, and E.S.~Titi. The Navier–Stokes-α model of fluid turbulence. 
{\em Phys. D} \textbf{152}:505--519.


\bibitem{Foias2} C.~Foias, D.D.~Holm, and E.S.~Titi.
\newblock{The three dimensional viscous Camassa-Holm equations, and their relation to the
Navier-Stokes equations and turbulence theory.}
\newblock{\em J. Dyn.
Differ. Equ.} \textbf{14}:1-35, 2002.

\bibitem{GH}B.~Geurts and D.~D.~Holm. Leray and LANS-$\alpha$ modeling of turbulent mixing. {\em Journal of Turbulence} \textbf{7}:1--33, 2006. 


\bibitem{HT}D.~D.~Holm and E.~S.~Titi. Computational models of turbulence: the LANS-$\alpha$ model and the role of global analysis. {\em SIAM News} \textbf{38}(7).

\bibitem{ILT}A.~Ilyin, E.~M.~Lunasin and E.~S.~Titi. A modified-Leray-$\alpha$ subgrid scale model of turbulence. {\em Nonlinearity} \textbf{19}:879--897, 2006.


\bibitem{KAP} A.~Kupiainen. Statistical theories of turbulence. In \newblock{\em Advances in Mathematical Sciences and Applications.}
Gakkotosho, Tokyo, 2003.

\bibitem{LL-0}W.~Layton and R.~Lewandowski. A high accuracy Leray-deconvolution model of turbulence and its limiting behavior. {\em  Anal. Appl.} \textbf{6}:23--49, 2008.

\bibitem{LL}W.~Layton and R.~Lewandowski. On a well-posed turbulence model. {\em Discrete and Continuous Dynamical Systems–Series B} \textbf{6}:111--128, 2006.

\bibitem{Leray}J.~Leray. Essai sur le mouvement d'un fluide viqueux remplissant l'espace. {\em Acta Math.} \textbf{63}:193--248, 1934.

\bibitem{LKT}E.~M.~Lunasin, S.~Kurien and E.~S.~Titi. Spectral scaling of the Leray-α model for two-dimensional turbulence. {\em J. Phys. A Math. Theor.} \textbf{41}:344014, 10pp, 2008.

\bibitem{LKTT}E.~M.~Lunasin, S.~Kurien, M.~Taylor and E.~S.~Titi. A study of the Navier–Stokes-α model for two-dimensional turbulence. {\em Journal of Turbulence} \textbf{8}:1--21, 2007.

\bibitem{ROZOVSKII1} R.~Mikulevicius and B.L.~ Rozovskii. %
\newblock{Stochastic {N}avier-{S}tokes {E}quations and {T}urbulent {F}lows.} %
\newblock{\em SIAM J. Math. Anal.}, \textbf{35}(5):1250-1310, 2004.

\bibitem{MKSM}K.~Mohseni, B.~Kosovi\'c, S.~Schkoller and J.~E.~Marsden.
Numerical simulations of the Lagrangian averaged Navier-Stokes equations for homogeneous isotropic turbulence. {\em Phys. Fluid.} \textbf{15}:524--544.

\bibitem{NS}B.~Nadiga and S.~Skoller. Enhancement of the inverse-cascade of energy in the two-dimensional Lagrangian-averaged Navier-Stokes equations. 
{\em Phys. Fluid.} \textbf{13}:1528--1531, 2001. 

\bibitem{Prin} Printems J., \textit{On the discretization in time of parabolic stochastic partial
differential equations},  M2AN Math. Model. Numer. Anal., \textbf{35} (2001), p. 1055--1078.


 \bibitem{Temam} R.~Temam. \newblock {\em Navier-{S}tokes {E}quations}. %
\newblock North-Holland, 1979.


\end{thebibliography}
\end{document}